\documentclass[graybox]{SNmult}

\nonstopmode

\RequirePackage{silence}
\usepackage{type1cm}
\usepackage{makeidx}
\usepackage{graphicx}
\usepackage{multicol}
\usepackage[bottom]{footmisc}
\usepackage{newtxtext}
\usepackage[varvw]{newtxmath}
\usepackage{amsmath,amsfonts,amsthm,amsxtra,bbm}
\usepackage{mathtools}
\usepackage{setspace,graphicx,color,pdflscape}
\usepackage{mathrsfs}

\makeindex
\usepackage[colorlinks=true]{hyperref}
\hypersetup{urlcolor=blue, linkcolor=blue, citecolor=red, anchorcolor=blue]}
\usepackage{orcidlink}

\newcommand{\R}{{\mathbb R}}
\newcommand{\N}{{\mathbb N}}

\newcommand{\be}[1]{\begin{equation}\label{#1}}
\newcommand{\ee}{\end{equation}}
\renewcommand{\(}{\left(}
\renewcommand{\)}{\right)}

\newcommand{\msc}[1]{\href{https://zbmath.org/classification/?q=cc:#1}{#1}}

\newcommand{\sfd}{\mathsf d}
\DeclareMathOperator*{\argmin}{arg\,min}
\newcommand{\Gh}{\mathcal{G}_h}
\newcommand{\Pg}{\mathcal{P}_2(\Gamma)}
\newcommand{\ch}{\mathsf c_h}
\newcommand{\spt}{\mathrm{supp}}
\newcommand{\Fm}{\mathcal{F}_m}
\newcommand{\IFm}{\mathcal{I}_m}
\newcommand{\Cst}{\mathscr C}

\begin{document}

\title*{\Large{Nonlinear kinetic Fokker--Planck equations as gradient flows of the free energy}}
\titlerunning{Kinetic gradient flows}

\author{Giovanni Brigati, Guillaume Carlier, Jean Dolbeault, Filippo Quattrocchi}

\institute{
Giovanni Brigati \orcidlink{0000-0002-3698-9995}\at ISTA, Institute of Science and Technology Austria, Am Campus 1, Klosterneuburg, 3400, Austria, \email{giovanni.brigati@ist.ac.at}
\and
Guillaume Carlier \orcidlink{0000-0001-5801-4622} \at CEREMADE, Centre de Recherche en Math\'ematiques de la D\'ecision (CNRS UMR n$^\circ$~7534),\newline PSL University, Universit\'e Paris-Dauphine, Place de Lattre de Tassigny, 75775, Paris 16, France, and Inria-Paris, Mokaplan, \newline\email{carlier@ceremade.dauphine.fr}
\and
Jean Dolbeault \orcidlink{0000-0003-4234-2298} \at CEREMADE, Centre de Recherche en Math\'ematiques de la D\'ecision (CNRS UMR n$^\circ$~7534),\newline PSL University, Universit\'e Paris-Dauphine, Place de Lattre de Tassigny, 75775, Paris 16, France, \newline\email{dolbeaul@ceremade.dauphine.fr}
\and
Filippo Quattrocchi \orcidlink{0009-0000-9773-1931} \at LMO, Laboratoire de Math\'ematiques d'Orsay, Facult\'e des Sciences d’Orsay, Universit\'e Paris-Saclay, 307 rue Michel Magat, 91405, Orsay, France, \newline\email{filippo.quattrocchi@universite-paris-saclay.fr}
}

\maketitle
\thispagestyle{empty}

\abstract{
We consider nonhomogeneous kinetic equations that involve a free transport operator and a diffusion of porous medium type acting on velocities. The main novelty is a gradient flow interpretation of dynamics driven by an interplay of conservative and dissipative effects. We rely on a notion of discrepancy adapted to a phase space of positions and velocities, built upon second-order characteristics obeying Newton's laws. The equation appears as the steepest descent of the free energy functional. We also prove that approximate solutions constructed with an implicit Euler scheme converge to a solution of the kinetic equation. Thus, we generalise to a family of nonlinear kinetic equations the celebrated JKO scheme in mass transport theory. Most of our results are new even in the case of the linear Vlasov--Fokker--Planck equation.
\\[8pt]
{\bf 2020 MSC\ } Primary: \msc{49Q22}. Secondary: \msc{82C40}; \msc{35Q49}; \msc{35K55}.
\keywords{Kinetic optimal transport $\cdot$ JKO scheme $\cdot$ Gradient flow}
}

\section{Introduction and main results}
\label{Sec:Intro}
For any $m>1$, we consider the nonlinear kinetic Fokker--Planck equation
\be{eq:nlkpm.}
\partial_tf+v\cdot\nabla_xf=\nabla_v\cdot\big(f \, ( \nabla_v P_m(f) + v)\big)
\ee
for $t\ge0$, over the phase space $\Gamma := \R^d \times \R^d\ni(x,v)$ of positions and velocities, where $f$ is a nonnegative distribution function and
\[
P_m(f) \coloneqq \frac{m}{m-1} f^{m-1}
\]
is the pressure variable. With $P_1(f) \coloneqq \log f$, the case $m=1$ is the linear Vlasov--Fokker--Planck equation corresponding to the second-order Langevin process. Equation~\eqref{eq:nlkpm.} admits a Lyapunov functional, namely the \emph{free energy} $\Fm$ given by
\[
\Fm(f) \coloneqq \iint_\Gamma\( \frac1m\,P_m(f) + \frac{1}{2}\,|v|^2 \) \,f dx \, dv\,.
\]
Indeed, for a smooth solution of~\eqref{eq:nlkpm.} with sufficient decay properties, we have
\[
\frac d{dt}\Fm\big(f(t,\cdot,\cdot)\big)=-\,\mathcal{I}_m(f)\coloneqq -\iint_\Gamma \left|\nabla_v P_m(f) + v\right|^2 \, f \, dx \, dv \leq 0\,,
\]
where $\mathcal{I}_m(f)$ denotes a generalized \emph{Fisher information} functional. Characterising the nonhomogeneous kinetic equation~\eqref{eq:nlkpm.} as a gradient flow of the energy $\Fm$ requires a notion of \emph{discrepancy}, tailored on the second-order nature of Newton's equations
\[
x'(t)=v(t)\,, \quad v'(t)=F_t\big(x(t),v(t)\big)
\]
associated to a force field $\R^+ \times \Gamma\ni(t,x,v)\mapsto F_t(x,v)$ with values in $\R^d$. On a finite time interval $[0,T]\ni t$, a curve of measures $(\mu_t)_t$ subject to a measurable force field $(F_t)_t$ with values in $\mathrm{L}^2(\mu_t;\mathbb R^d)$ gets transported according to the Vlasov equation
\begin{equation}
\label{eq:vlasov.}
\partial_t \mu_t + v\cdot \nabla_x \mu_t + \nabla_v \cdot (F_t \, \mu_t) = 0\quad \text{in} \quad\mathcal{D}'\big((0,T) \times \Gamma\big)\,.
\end{equation}
Let $\Pg$ be the set of probability measures $\mu$ on $\Gamma$ with $\iint_\Gamma \(|x|^2+|v|^2 \) d\mu <\infty$. As in~\cite{brigati2025kinetic}, given $\mu$, $\nu \in \Pg$ and $T>0$, we define
\begin{equation}
\label{eq:d_T.}
\mathsf d_T^2(\mu,\nu) \coloneqq \inf_{(\mu_t,F_t)_{t\in[0,T]}} \,T\int_0^T \|F_t\|^2_{\mathrm{L}^2(\mu_t)} \, dt\,,\quad \mu_0=\mu\,, \quad \mu_T = \nu\,,
\end{equation}
by minimising over all admissible $(\mu_t,F_t)_t$ to~\eqref{eq:vlasov.}. In~\cite[Theorems~1.4-1.7]{brigati2025kinetic}, $\sfd_T$-regular curves of measures $(\mu_t)_t$ are identified with solutions to~\eqref{eq:vlasov.} and
\[
|\mu'_t|_\sfd \coloneqq \lim_{h\to0_+} \, \frac{\sfd_{h}(\mu_t,\mu_{t+h})}{h}
\]
is well defined for almost every $t$. We shall write $\Fm(\mu_t)=+\infty$ and $\mathcal{I}_m(\mu_t)=+\infty$ if $\mu_t$ is not absolutely continuous with respect to Lebesgue's measure. For a smooth function~$f$, we have $\nabla_v\cdot\big(f\,\nabla_v P_m(f)\big)=\Delta_vf^m$, $\sqrt f\,\nabla_v P_m(f)=\frac m{m-1/2}\,\nabla_v\big(f^{m-1/2}\big)$
and
\begin{equation}\label{eq:redefinedFisher}
\mathcal{I}_m(f)=\iint_\Gamma\(\big(\tfrac{2\,m}{2\,m-1}\big)^2\,\big|\nabla_v\big(f^{m-1/2}\big)\big|^2\,+|v|^2\,f-2\,d\,f^m\)dx\,dv
\end{equation}
which are relevant quantities for nonsmooth solutions to~\eqref{eq:vlasov.}. Nonnegative weak solutions $f\in\mathrm L^1_{\rm loc}\big(\R^+;\mathrm L^1(1+|v|^2,\Gamma)\big)\cap\mathrm L^m_{\rm loc}\big(\R^+;\mathrm L^m(\Gamma)\big)$ have to be understood in the distributional sense and written in terms of $f$ and $f^m$ only. See Lemma~\ref{lemma:chain} in Section~\ref{Sec:ChainRule-GradienFlow} for details and consequences. With these conventions, we can state our first main result.
\begin{theorem}[Chain rule for the free energy and gradient flow].
\label{thm1.}
Let $m\in[1,3/2]$ and $T>0$. If $\mu_t=f(t,\cdot,\cdot)$ is in $\Pg$ for a.e.~$t$ and solves~\eqref{eq:vlasov.}, with $\int_0^T \big (\Fm(\mu_t)+ \mathcal{I}_m(\mu_t)\big) dt + \int_0^T \|F_t\|^2_{\mathrm{L}^2(\mu_t)} \, dt< \infty$, then, for a negligible set $A\subset[0,T]$ and a.e.~$(a,b)\in\big([0,T]\setminus A\big)^2$ with $b>a$, 
\begin{equation}
\label{eq:chainp.}
\begin{aligned}
\Fm(\mu_b) - \Fm(\mu_a) &= \int_a^b \iint_\Gamma F_t \cdot \big(\nabla_v P_m(\mu_t) + v\big) \, \mu_t \, dx\,dv \, dt\\
&\geq -\,\frac{1}{2} \int_a^b \(|\mu'_t|_\sfd^2+\mathcal{I}_m(\mu_t)\) dt\,.
\end{aligned}
\end{equation}
Equality in the inequality of~\eqref{eq:chainp.} is attained if and only if $f$ is a solution to the nonlinear kinetic Fokker--Planck equation~\eqref{eq:nlkpm.} on the time interval $[0,T]$. Moreover, in this case, \begin{equation} \label{eq:metder} |\mu'_t|_\sfd = \mathcal{I}_m(\mu_t) \, .
\end{equation}
\end{theorem}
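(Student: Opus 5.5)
The plan is to prove the identity in~\eqref{eq:chainp.} first, then obtain the inequality from Young's inequality combined with a comparison between $\|F_t\|_{\mathrm L^2(\mu_t)}$ and the metric derivative $|\mu'_t|_\sfd$.

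\textbf{Step 1: the chain rule identity.} Formally, differentiating $\Fm(\mu_t)$ along~\eqref{eq:vlasov.} gives
\[
\frac{d}{dt}\Fm(\mu_t)=\iint_\Gamma\big(P_m(f)+\tfrac12|v|^2\big)\,\partial_tf\,dx\,dv .
\]
The transport term $v\cdot\nabla_x f$ contributes nothing, because $\frac1m P_m(f)f$ is a function of $f$ alone and $v\cdot\nabla_x(\tfrac12|v|^2)=0$. Integrating the force term by parts in $v$ gives $\iint F_t\cdot(\nabla_vP_m(f)+v)\,f$.

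To make this rigorous, I would use the framework announced in Lemma~\ref{lemma:chain}. Mollify $f$ in $(t,x,v)$ with a kernel $\rho_\varepsilon$. Then $f_\varepsilon$ solves~\eqref{eq:vlasov.} with force $F^\varepsilon_t=(F_tf)_\varepsilon/f_\varepsilon$ plus a commutator error coming from the term $v\cdot\nabla_x$. This commutator is controlled by the classical DiPerna--Lions estimate, since $v$ is smooth. I would also insert a cutoff in $|x|,|v|$. The smooth chain rule is then integrated on $[a,b]$, and we pass to the limit $\varepsilon\to0$.

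The term $F\cdot v\,f$ converges using $F\in \mathrm L^2(\mu_t)$ and second moments. For the term $F\cdot\nabla_vP_m(f)\,f$, write it as $F\sqrt f\cdot\frac{m}{m-1/2}\nabla_v f^{m-1/2}$. Here $\nabla_vf^{m-1/2}\in\mathrm L^2$, which comes from the finiteness of $\mathcal I_m$ via~\eqref{eq:redefinedFisher}. This is exactly where $m\le3/2$ enters: in that range $f^m\le f+f^{3/2}$-type control holds and $f^{2m-1}\le$ integrable quantities. Convexity of $s\mapsto s^m$ (Jensen) gives $\nabla_v(f_\varepsilon^{m-1/2})$ bounds uniformly. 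The values $\Fm(\mu_a)$ converge for a.e.\ $a,b$, namely Lebesgue points, which defines the negligible set $A$.

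\textbf{Main obstacle.} I expect the passage to the limit in the nonlinear term $F^\varepsilon\cdot\nabla_vP_m(f_\varepsilon)f_\varepsilon$ to be the hardest part. This requires strong $\mathrm L^2$ convergence of $\nabla_v f_\varepsilon^{m-1/2}$ and of $F^\varepsilon\sqrt{f_\varepsilon}$. I would try to get it from the bound $\|F^\varepsilon\|_{\mathrm L^2(f_\varepsilon)}\le\|F\|_{\mathrm L^2(f)}$ (joint convexity of $|a|^2/b$) combined with weak convergence and norm convergence.

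\textbf{Step 2: the inequality.} Young's inequality gives
\[
F\cdot(\nabla_vP_m+v)\ge-\tfrac12|F|^2-\tfrac12|\nabla_vP_m+v|^2 .
\]
Next I would use $|\mu_t'|_\sfd\le\|F_t\|_{\mathrm L^2(\mu_t)}$, with equality for the minimal-norm field, as given by~\cite[Theorems~1.4--1.7]{brigati2025kinetic}. Applying Step 1 with that optimal field yields~\eqref{eq:chainp.}.

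\textbf{Step 3: the equality case.} Equality forces two things: $F_t$ is optimal, and $F_t=-(\nabla_vP_m(f)+v)$ $\mu_t$-a.e. Substituting this force into~\eqref{eq:vlasov.} gives~\eqref{eq:nlkpm.}. Conversely, if $f$ solves~\eqref{eq:nlkpm.}, then $F=-(\nabla_vP_m+v)$ is admissible and Step 1 gives $\Fm(\mu_b)-\Fm(\mu_a)=-\int_a^b\mathcal I_m$. Combined with the inequality, this forces $|\mu'_t|_\sfd^2=\mathcal I_m(\mu_t)$ a.e., which is~\eqref{eq:metder}, read with the square.
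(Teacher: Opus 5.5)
Your architecture matches the paper's: regularise, prove the energy identity for smooth approximations, pass to the limit, then apply Young's inequality with the minimal-norm force. Replacing $F_t$ by the optimal field is equivalent to the paper's device of adding $G_t$ with $\nabla_v\cdot(G_t\mu_t)=0$ and taking the infimum. The gap is in the step you yourself call the main obstacle.

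\textbf{The missing idea.} To pass to the limit in $\int_a^b\!\iint F^\varepsilon\cdot\nabla_vP_m(f_\varepsilon)\,f_\varepsilon$, you need at least a uniform $\mathrm L^2$ bound on $\sqrt{f_\varepsilon}\,\nabla_vP_m(f_\varepsilon)=\frac{m}{m-1/2}\nabla_v(f_\varepsilon^{m-1/2})$. Convexity of $s\mapsto s^m$ does not give this. Jensen with $s^m$ only yields $\|f_\varepsilon\|_{\mathrm L^m}\le\|f\|_{\mathrm L^m}$, which controls the free energy, not the Fisher information. What is needed is the identity $|\nabla_v(f^{m-1/2})|^2=(m-\tfrac12)^2\,\Theta(f,\nabla_vf)$ with $\Theta(s,X)=s^{2m-3}|X|^2$, together with the fact that $\Theta$ is jointly convex and lower semicontinuous exactly when $2m-3\in[-1,0]$, i.e.\ $m\in[1,3/2]$. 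Since mollification commutes with $\nabla_v$, the pair $(f_\varepsilon,\nabla_vf_\varepsilon)$ is an average of $(f,\nabla_vf)$ against a probability kernel. Jensen then gives
$\int_a^b\!\iint\Theta(f_\varepsilon,\nabla_vf_\varepsilon)\le\int_{a-\varepsilon}^{b+\varepsilon}\!\iint\Theta(f,\nabla_vf)$.
This is the $m$-analogue of the joint convexity of $|a|^2/b$ that you correctly use for $F^\varepsilon$. It, and not any ``$f^m\le f+f^{3/2}$-type control'', is where the restriction $m\le3/2$ enters. With this bound, strong convergence of $F^\varepsilon\sqrt{f_\varepsilon}$ plus weak convergence of $\nabla_v(f_\varepsilon^{m-1/2})$ already suffices for the cross term. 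The paper even gets strong convergence, since the bound also gives convergence of norms.

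\textbf{Commutator.} With a standard mollifier in $(t,x,v)$, $f_\varepsilon$ is not an exact solution of~\eqref{eq:vlasov.}. The defect is $r_\varepsilon=\tilde\rho_\varepsilon*f$ with the zero-mean kernel $\tilde\rho(s,\xi,\zeta)=\zeta\cdot\nabla_\xi\rho(s,\xi,\zeta)$. Your energy identity therefore picks up an extra term $\int_a^b\!\iint P_m(f_\varepsilon)\,r_\varepsilon$; the $|v|^2$ contribution integrates to zero. Knowing that $r_\varepsilon\to0$ in $\mathrm L^1_{\rm loc}$ is not enough to kill this term. For $m>1$ you can conclude, because $r_\varepsilon\to0$ in $\mathrm L^m$ while $P_m(f_\varepsilon)$ stays bounded in $\mathrm L^{m/(m-1)}$. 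For $m=1$, the pairing with $\log f_\varepsilon$ needs a separate argument. The paper avoids the issue with the Galilean convolution, which commutes with $\partial_t+v\cdot\nabla_x$, so $(f^\epsilon,F^\epsilon)$ solves~\eqref{eq:vlasov.} exactly.

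\textbf{Equality case.} It is not the given $F_t$ that must be optimal and equal to $-(\nabla_vP_m(f)+v)$. $F_t$ is determined only up to some $G_t$ with $\nabla_v\cdot(G_t\,\mu_t)=0$, and it is the minimal-norm field that equals $-(\nabla_vP_m(f)+v)$. This still yields~\eqref{eq:nlkpm.}, as in the paper.
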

\noindent In other words, among solutions of Vlasov's equations, all curves of maximal energy dissipation (Fisher information)  solve~\eqref{eq:nlkpm.}. Our proof crucially relies on convexity properties of the Fisher information, which are only ensured under the condition~$m \in [1,3/2]$. Next, let us consider a second-order version of the JKO scheme.
\begin{theorem}[Minimising-movement scheme and maximal dissipation].\label{thm2.}\\
Let $m \geq 1.$ The scheme starting at $\mu^{(h)}_0 = \mu_0\in\Pg$ with $\Fm(\mu_0)<\infty$, and
\begin{equation}
\label{eq:JKO.}
\mu^{(h)}_{k+1} \in \argmin_{\nu \in \Pg} \( \Fm(\nu) + \frac{1}{2\,h} \, \sfd_h^2\Big(\mu^{(h)}_k,\nu\Big)\)
\end{equation}
is well-posed for any $h \in (0,1)$ and $k\in\N$.
The piecewise constant interpolations $\big(\mu^{(h)}_t\big)_t$
are relatively compact in~$\mathrm{L}^1_\mathrm{loc}\big(\R^+;\mathrm{L}^1(\Gamma)\big)$. Up to the extraction of a sequence, any limit $\mu = \lim_{h \to 0_+} \mu^{(h)}$ solves~\eqref{eq:nlkpm.} in $\mathcal{D}'\big(\R^{+} \times \Gamma\big)$ with initial datum~$\mu_0$. Moreover, if $m\in [1, 3/2]$, $\mu$ dissipates the energy maximally, in the sense that~\eqref{eq:chainp.} is an equality.
\end{theorem}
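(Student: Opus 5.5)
I would follow the classical JKO/minimising-movement strategy, adapted to the kinetic discrepancy $\sfd_h$ of~\cite{brigati2025kinetic}. The plan has four steps, carried out in this order.

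\textbf{Step 1: well-posedness of one step.} Fix $\mu^{(h)}_k$ with finite second moments and minimise $\nu\mapsto\Fm(\nu)+\frac1{2h}\sfd_h^2(\mu^{(h)}_k,\nu)$ by the direct method.
\begin{itemize}
\item A competitor of finite value exists: transport $\mu^{(h)}_k$ along free flight $(x,v)\mapsto(x+hv,v)$, which needs no force, and take $\nu$ equal to this pushforward. Its energy is finite because the free-flight map is measure preserving and leaves $v$ unchanged.
\item For coercivity, $\sfd_h^2(\mu,\nu)$ controls $\iint|v|^2d\nu$ and $\iint|x|^2d\nu$ up to the moments of $\mu$. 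This follows from Newton's laws: $v_h=v_0+\int F$ and $x_h=x_0+hv_0+\int\!\!\int F$, with Cauchy--Schwarz.
\item The entropy part of $\Fm$ is bounded below by $-C(1+M_2)$ when $m=1$, and is nonnegative when $m>1$. Hence minimising sequences are tight and equi-integrable, thanks to superlinearity of $s\mapsto s^m$ or $s\log s$.
\item Lower semicontinuity of $\Fm$ is classical. For $\sfd_h^2$, use the representation as a cost $\ch$ on $\Gamma\times\Gamma$, namely the minimal action of a curve with prescribed endpoints, which is available from~\cite{brigati2025kinetic}. Then $\sfd_h^2$ is an optimal transport cost with a continuous, quadratic cost and is weakly l.s.c.
\end{itemize}
A minimiser therefore exists for every $h\in(0,1)$ and every $k$, by induction.

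\textbf{Step 2: a priori estimates and compactness.} Comparing with $\nu=\mu^{(h)}_k$ itself is not admissible, since $\sfd_h(\mu,\mu)\neq0$ in general. I would instead compare with the free-flight pushforward $\tilde\mu_k$ of $\mu^{(h)}_k$. This gives
\[
\Fm(\mu_{k+1})+\tfrac1{2h}\sfd_h^2(\mu_k,\mu_{k+1})\le \Fm(\tilde\mu_k)=\Fm(\mu_k).
\]
Summing over $k$ yields the uniform bound $\sup_k\Fm(\mu_k)+\sum_k\frac1{2h}\sfd_h^2\le\Fm(\mu_0)$, together with uniform second-moment bounds on compact time intervals.

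For $\mathrm L^1_{\rm loc}(\R^+;\mathrm L^1(\Gamma))$ compactness, time equicontinuity in a weak distance alone is not enough; strong compactness also needs regularity in $v$. That regularity comes from the Euler--Lagrange equation of Step 3, which bounds the discrete Fisher information $\mathcal I_m(\mu_{k+1})$ by $\sfd_h^2/h^2$, so that $\sum_k h\,\mathcal I_m(\mu_{k+1})$ is bounded. One then has:
\begin{itemize}
\item $\nabla_v f^{m-1/2}\in\mathrm L^2_{t,x,v}$, from the Fisher information bound;
\item approximate kinetic transport in $t,x$, from the discrete equation.
\end{itemize}
A velocity-averaging / hypoelliptic compactness argument, or an Aubin--Lions--Dubinskii lemma in the kinetic form of Rossi--Savaré, then gives strong compactness. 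I expect this transfer of regularity from $v$ to $(t,x)$ to be the \emph{main obstacle}. The first thing I would try is a discrete averaging lemma applied to $f^{m-1/2}$ or to truncations of $f$.

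\textbf{Step 3: Euler--Lagrange equation and passage to the limit.} Perturb the minimiser by the pushforward of $\mu_{k+1}$ under $(x,v)\mapsto(x,v+\varepsilon\xi(x,v))$ with $\xi$ smooth, which is the natural variation for a force-driven problem. Use the optimal plan for $\ch$, whose optimal curves have piecewise explicit (cubic) trajectories.
\begin{itemize}
\item The first variation of $\Fm$ gives $\iint(\nabla_vP_m(f)+v)\cdot\xi\,f$, written as $-\iint f^m\nabla_v\!\cdot\xi+\iint v\cdot\xi f$.
\item The first variation of $\frac1{2h}\sfd_h^2$ gives $\frac1h\iint\partial_{v'}\ch\cdot\xi\,d\gamma$, which equals the terminal force of the optimal curves.
\item Testing against $\varphi$, a Taylor expansion of $\varphi$ along optimal curves turns the terminal momentum into the discrete version of $\partial_tf+v\cdot\nabla_xf$, with an error $O(\sfd_h^2)$.
\end{itemize}
Summing in $k$ gives the weak form of~\eqref{eq:nlkpm.} up to $o(1)$. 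The strong convergence from Step 2 lets the nonlinear term $f^m$ pass to the limit, while linear terms only need weak convergence. The initial datum is recovered from the moment bounds.

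\textbf{Step 4: maximal dissipation for $m\in[1,3/2]$.} Take limits in the discrete energy inequality, upgraded by De Giorgi's variational interpolation to the sharp form
\[
\Fm(\mu_b)+\tfrac12\int_a^b\big(|\mu'_t|^2_\sfd+\mathcal I_m(\mu_t)\big)dt\le\Fm(\mu_a).
\]
This uses lower semicontinuity of $\mathcal I_m$ (convexity, which holds precisely for $m\le3/2$) and of the metric derivative. Combined with the reverse inequality from Theorem~\ref{thm1.}, this forces equality in~\eqref{eq:chainp.}.
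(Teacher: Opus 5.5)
Your Step~1, the free-flight competitor $(\mathcal G_h)_\#\mu^{(h)}_k$, and the overall De Giorgi strategy agree with the paper. There is, however, a genuine gap exactly where you locate the main obstacle: strong $\mathrm L^1$ compactness. It is one of the assertions of the theorem, and you also rely on it to pass to the limit in $f^m$ and in $\IFm$. Fisher information and moments give only tightness and equicontinuity under $v$-translations. For instance, $f_n(x,v)=g_n(x)\,M(v)$, with $M$ Gaussian and $g_n=c\,(1+\sin(n\,x_1))\,\chi(x)$ for a fixed cut-off $\chi$, has bounded $\Fm$, moments and $\IFm$, yet no strongly convergent subsequence. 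So Rossi--Savar\'e cannot be applied with $\IFm$ alone. The averaging/hypoelliptic route is left unproved, and it would have to handle piecewise-constant interpolants that solve the kinetic equation only up to $O(h)$ in $\mathcal D'$, with regularity on $f^{m-1/2}$ rather than $f$, and also the De Giorgi interpolants. The missing idea is that no transfer of regularity from $v$ to $x$ is needed. Since $\mathsf d_h^2(\mu,\nu)=W_h^2((\mathcal G_h)_\#\mu,\nu)$, each step is an ordinary JKO step for the norm $|\cdot|_h$ started at $(\mathcal G_h)_\#\mu^{(h)}_k$. Such steps are $\mathrm L^1$-contractive by \cite{MR4553540}, extended from balls to $\Gamma$ by restriction, normalisation and $\Gamma$-convergence. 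Because $\Fm$ is invariant under $\tau_\delta(x,v)=(x-\delta,v)$ and $\mathcal G_h$ commutes with $\tau_\delta$, the minimiser started at $(\tau_\delta\circ\mathcal G_h)_\#\mu^{(h)}_k$ is $(\tau_\delta)_\#\mu^{(h)}_{k+1}$. Hence $\|(\tau_\delta)_\#\mu^{(h)}_k-\mu^{(h)}_k\|_{\mathrm L^1(\Gamma)}\le\|(\tau_\delta)_\#\mu_0-\mu_0\|_{\mathrm L^1(\Gamma)}$ uniformly in $h$ and $k$. Add $v$-equicontinuity from $\nabla_v f^{m-1/2}\in\mathrm L^2$ (write $f=g^{2\theta}$ with $g=f^{m-1/2}$, $\theta=1/(2m-1)\le1$, and use H\"older) and tightness. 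Riesz--Fr\'echet--Kolmogorov then gives compact sublevel sets of $\IFm$ restricted to densities with these moment and $x$-translation bounds. Rossi--Savar\'e with $W(\mu^{(h)}_t,\mu^{(h)}_s)\le C\sqrt{|t-s|+h}$ concludes, and the De Giorgi interpolants inherit the same $x$-modulus.

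Two further steps fail as written. In Step~3, velocity variations alone are not enough. Take $(x,v)$ as the initial and $(y,w)$ as the terminal state. Then $\frac1{2h}\partial_w\mathsf c_h$ is the \emph{terminal} force $F(h)$ of the cubic optimal curve, whereas the velocity increment in your Taylor expansion is $w-v=\int_0^hF=h\,F(h)+6\big(\frac{y-x}h-\frac{v+w}2\big)$. This correction has size up to $\sqrt3\,\mathsf d_h$ in $\mathrm L^2(\gamma)$, not $O(\mathsf d_h^2)$, and $\sum_k\mathsf d_h(\mu^{(h)}_k,\mu^{(h)}_{k+1})\le\big(2N\,h\,\Fm(\mu_0)\big)^{1/2}$ is only bounded, not small. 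You must also vary positions. The $x$-Euler--Lagrange equation gives $\frac{12}{h^2}\big(\frac{y-x}h-\frac{v+w}2\big)=-\nabla_xP_m(f)$ barycentrically, so the correction contributes $\frac{h^2}2\iint f^m\,\nabla_x\cdot\nabla_v\varphi=O(h^2)$ per step. This is where $D_h=\frac h2\nabla_x+\nabla_v$ in the paper's optimal map comes from. In Step~4, the $W_h$-slope at a De Giorgi interpolant $\hat\mu$ only bounds $\IFm\big((\mathcal G_{-h/2})_\#\hat\mu\big)$, so strong compactness is needed for these sheared curves too. Moreover, $\IFm$ contains the concave term $-2\,d\iint f^m$ and is not convex for $m>1$. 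Its lower semicontinuity therefore comes from strong $\mathrm L^1$ convergence and holds for every $m\ge1$; the restriction $m\le3/2$ enters only through the chain rule of Theorem~\ref{thm1.}.
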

One crucial point of our scheme~\eqref{eq:JKO.} is \emph{strong compactness}, which we establish by extending the $\mathrm{L}^1$-contraction argument of~\cite{MR4553540} (also see~\cite{leger2025syntheticapproachcomparisonprinciples}) beyond the case of bounded domains. Contraction entails uniform equi-continuity with respect to $x$-translations, while the Fisher information $\mathcal{I}_m$ bounds the oscillations in the $v$-variable. Thus, we control the relevant nonlinear quantities along the scheme and identify limit curves $\mu$ with solutions to~\eqref{eq:nlkpm.}. The threshold $m =3/2$ does not play a role here, as $\mathcal{I}_m$ is lower semicontinuous in the strong topology for any $m \geq 1$.

Moreover, we prove that limit curves produced by the scheme satisfy some reverse form of Inequality~\eqref{eq:chainp.} for any $m\ge1$, see~\eqref{eq:ede3}. When $m\in[1,3/2]$, the maximal dissipation property of Theorem~\ref{thm1.} follows, \emph{i.e.}, we have equality in~\eqref{eq:chainp.}. 
This gradient flow interpretation of~\eqref{eq:nlkpm.} is new, even in the case $m=1$, and the strong compactness for~\eqref{eq:JKO.} may be of independent interest.

Equation~\eqref{eq:nlkpm.} is probably an oversimplified nonlinear kinetic equation for significant applications in physics. However, the gradient flow property not only makes sense in a functional framework of mathematical interest, but also admits an interpretation in mechanics that may apply in more realistic models.

\medskip Let us conclude this introduction with a brief review of the literature. The \emph{JKO scheme}~\cite{MR1617171} (also see~\cite[Section~4.4]{MR3625852}) characterises the linear Fokker--Planck equation as the maximal dissipation flow of the free energy according to the Wasserstein distance. This property was formally extended to nonlinear diffusions by \emph{Otto's calculus} in~\cite{MR1842429} and later justified as a gradient flow in~\cite{MR2401600}. In linear diffusion models, adapted free energy functionals and distances can be chosen simultaneously~\cite{MR2448650}. \emph{Minimising-movement schemes for kinetic equations} appeared in~\cite{MR1893894,MR1763126}, and later in~\cite{MR2731396,MR3271101,MR3129075}. For the classical alternative of splitting schemes, we refer to~\cite{park2024variational} and references therein. Various notions of optimal transport distances adapted to Newton's laws have been considered for instance in~\cite{MR1763126,MR1893894,MR2560310,MR2731396,brigati2025kinetic,lee2026deepkineticjkoschemes}, which also play a role in \emph{Wasserstein stability} of solutions to Vlasov's equation, including mean-field forces, Poisson kernels and Fokker--Planck diffusions in velocities (see, \emph{e.g.},~\cite{MR4393384} and references therein). Such an approach has interesting consequences for large time asymptotics, uniqueness, particle approximation and propagation of molecular chaos, which are out of our scope. However, we can refer to~\cite{lee2026deepkineticjkoschemes} for a recent numerical implementation with $m=1$, with or without self-consistent mean field Poisson coupling. In this paper, we rely on the \emph{kinetic optimal transport} discrepancy $\sfd_T$ of~\cite{brigati2025kinetic} in order to characterise~\eqref{eq:nlkpm.} as the flow of maximal dissipation of the free energy for $m\ge1$. The restriction $m\le3/2$ appears, for instance, also in
~\cite{erbar2025gradientflowclassdiffusion}. Apart from~\cite{MR3478291}, which is devoted to a model of granular media with a different approach, our gradient flow interpretation is new in kinetic theory, even in the linear case $m=1$, while the (weak) convergence of~\eqref{eq:JKO.} is already known from~\cite{MR3271101} if $m=1$. The first key ingredient is the \emph{chain rule}, introduced in~\cite[Chapter~10,~pp.~228 \& 233 (E)]{MR2401600} in the context of Wasserstein gradient flows. The proof via regularisation by convolution originates from~\cite{MR4780487,MR4746872} using the \emph{Galilean group} as in~\cite{MR4527757}. The second key ingredient is strong compactness for~\eqref{eq:JKO.}, as discussed above. Similar considerations, adapted to a splitting scheme, are also crucial in~\cite{brigati2026fundamental}.

\medskip This paper is organised as follows. Theorem~\ref{thm1.} is proved in Section~\ref{Sec:ChainRule-GradienFlow}. The proof of Theorem~\ref{thm2.} is split into two sections: convergence of the JKO scheme to a solution to~\eqref{eq:nlkpm.} in Section~\ref{Sec:JKO}; maximal dissipation in the range~$m\in [1, 3/2]$ in Section~\ref{sec:sec4}.

\section{Chain rule for the free energy and gradient flow}
\label{Sec:ChainRule-GradienFlow}

This section is devoted to the proof of Theorem~\ref{thm1.}. Without further notice, we shall assume that all corresponding assumptions are satisfied.

\bigskip\noindent\textbf{Step 1: Regularisation via Galilean convolution.} Let $(\mu,F)=(\mu_t,F_t)_t$ be as in Theorem~\ref{thm1.} and denote by $f_t$ the density of $\mu_t$. We extend (in time) the force field $F$ by $0$ outside $[0,T]$ and then extend $f$ by solving \eqref{eq:vlasov.} for all times. Let $\psi$ be a smooth probability density supported on $[-1,0]$ with $\psi>0$ on $(-1,0)$ and for $\epsilon\in (0,1)$, let us consider the mollifier on~$\R\times \Gamma$ defined by
\[
\eta_\epsilon(t,x,v):=\frac1{\pi^d \, \epsilon^{1+2d}}\,\psi \Big(\frac{t}{\epsilon}\Big) \; \mathrm{e}^{-\,\frac{1}{\epsilon^2}(|x|^2+|v|^2)}.
\]
Inspired by~\cite{MR4527757}, we regularise $f$ and $F$ by
\begin{equation}\label{Def:Galilean}
f^\epsilon:=\eta_\epsilon\star f\quad\text{and}\quad F^\epsilon:=\big(\eta_\epsilon\star(F\,f)\big)/f^\epsilon
\end{equation}
where the \emph{Galilean convolution} $\star$ is defined by
\[
(\eta_\epsilon\star f)(t,x,v):=\int_\R\iint_\Gamma\eta_\epsilon\big(t-s,x-y-s\,(v-w),v-w\big)\,f(s,y,w)\,dy\,dw\,ds\,.
\]
According to~\cite[Lemma 4.9]{brigati2025kinetic}, $(f^\epsilon_t,F^\epsilon_t)_t$ then is a smooth classical solution to~\eqref{eq:vlasov.} on $\R\times \Gamma$. Moreover, it is a strong approximation in the sense that $f^\epsilon$ and $f^\epsilon F^{\epsilon}$ converge strongly in $\mathrm{L}^1_{\mathrm{loc}}(\R; \mathrm{L}^1(\Gamma))$ to $f$ and $fF$ respectively. Passing to a vanishing sequence of $\epsilon$'s, we may also assume that the previous convergence holds a.e.~in $(t,x,v)$ and, for a.e.~$t$, in $\mathrm{L}^1(\Gamma)$. Again following~\cite[Lemma 4.9]{brigati2025kinetic}, we have
\begin{equation}\label{eq:dpiF}
\int_0^T \|F^\epsilon_t\|^2_{\mathrm L^2(f_t^\epsilon)} \, dt \leq \int_0^T \|F_t\|^2_{\mathrm{L}^2(\mu_t)} \, dt
\end{equation}
for any $\epsilon>0$ and
\begin{equation}\label{eq:epsbound}
\begin{aligned}
& \lim_{\epsilon\to0_+}\int_0^T \iint_\Gamma |x|^2 \, f^\epsilon_t \, dt = \int_0^T \iint_\Gamma |x|^2 \, d\mu_t \, dt\,,\\
& \lim_{\epsilon\to0_+}\int_0^T \iint_\Gamma |v|^2 \, f^\epsilon_t \, dt = \int_0^T \iint_\Gamma |v|^2 \, d\mu_t \, dt\,.
\end{aligned}
\end{equation}
By \eqref{eq:dpiF}, $\sqrt{f^\epsilon} F^{\epsilon}$ is bounded in $\mathrm{L}^{2}\big((0,T)\times \Gamma\big)$ and, up to the extraction of sequences, converges weakly to some limit $G$. By strong $\mathrm L^2$ convergence of $\sqrt{f^\epsilon}$ to $\sqrt{f}$ and strong $\mathrm L^1$ convergence of $f^\epsilon F^{\epsilon}$ to $f F$ we readily have $f F= \sqrt{f} G$, , $G=\sqrt{f} F$. Weak $\mathrm{L}^2$ convergence of $\sqrt{f^\epsilon} F^{\epsilon}$ to $\sqrt{f} F$ is in fact strong because of the convergence of the $\mathrm{L^2}$ norm deduced from \eqref{eq:dpiF}. Hence, we have
\begin{equation}\label{eq:epsconv}
\lim_{\epsilon\to0_+}\left\|\,\sqrt{f^\epsilon} \, F^\epsilon-\sqrt{f} \, F\,\right\|_{\,\mathrm{L}^2((0,T)\times\Gamma)}=0\,.
\end{equation}
By the same reasoning, from the second condition in \eqref{eq:epsbound}, we deduce, after passing to a subsequence if necessary,
\begin{equation}\label{eq:cvmomentveps}
\lim_{\epsilon\to0_+}\big\|\,v\,\big(\sqrt{f^\epsilon}-\sqrt{f}\big)\,\big\|_{\,\mathrm{L}^2((0,T)\times\Gamma)}=0 \quad\text{and}\quad \lim_{\epsilon\to0_+} \iint_\Gamma |v|^2 \, f^\epsilon_t = \iint_\Gamma |v|^2 \, d\mu_t
\end{equation}
where the second equality holds for a.e.~$t$.

\bigskip\noindent\textbf{Step 2: Bounds in free energy and Fisher information.} We rely on Jensen's inequality and on the \emph{data processing inequality}: integrated convex functions decrease under convolution. See for instance~\cite[Chapter~2]{MR2239987}. If $m>1$, using the data processing inequality and letting $\epsilon\to0_+$ gives
\[
\Vert f_t\Vert^m_{\mathrm L^m(\Gamma)}\le\liminf_{\epsilon\to0_+}\Vert f^\epsilon_t \Vert^m_{\mathrm L^m(\Gamma)} \leq \limsup_{\epsilon\to0_+}\frac{1}{\epsilon} \int_{\R} \psi \Big(\frac{s}{\epsilon}\Big)\,\Vert f_{t-s} \Vert^m_{\mathrm L^m(\Gamma)}\,ds=\Vert f_t\Vert^m_{\mathrm L^m(\Gamma)}
\]
for a.e.~$t$, where the first inequality holds by Fatou's Lemma. For $m=1$, we can argue similarly, using second moments (see for instance~\cite{MR1617171}) to bound the negative part of $f^\epsilon \log (f^\epsilon)$, and Fatou's Lemma. Using again \eqref{eq:cvmomentveps}, for any $m\ge1$, we obtain
\begin{equation}\label{eq:dpie}
\lim_{\epsilon\to0_+}\Fm(\mu^\epsilon_t) =\Fm(\mu_t)\quad\text{for a.e. } t \in [0,T]\,.
\end{equation}

Next, let us turn our attention to the Fisher information $\mathcal{I}_m(f_t)$ which is more delicate. First of all, to define $\mathcal{I}_m(f_t)$ rigorously via \eqref{eq:redefinedFisher}, we shall use the following lemma (whose standard proof by approximation is left to the reader).
\begin{lemma} \label{lemma:chain}
Let~$\alpha\in(0,+\infty)$ and let~$L$ be a homogeneous first-order differential operator. If~$g \in \mathrm{L}^1_\mathrm{loc}(\Gamma)$ is nonnegative,~$L\,g \in \mathrm{L}^1_\mathrm{loc}(\Gamma)$, and~$g^{\alpha-1}\, L\,g \in \mathrm{L}^1_\mathrm{loc}(\Gamma)$, then
\[
L\,(g^\alpha) = \alpha \, g^{\alpha-1} \, L\,g \,.
\]
\end{lemma}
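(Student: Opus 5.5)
The plan is to prove Lemma~\ref{lemma:chain} by a two-parameter approximation: mollify $g$ in space, and replace the power $s\mapsto s^\alpha$ by a smooth function with bounded derivative. This avoids both the lack of smoothness of $g$ and the singularity of $s^\alpha$ at $s=0$ when $\alpha<1$. Write $L=\sum_i a_i\,\partial_i$ with constant coefficients $a_i$, i.e.\ a homogeneous first-order operator such as $\nabla_v$ or $v\cdot\nabla_x$ restricted to fixed directions. For variable smooth coefficients the argument is the same, with a commutator estimate à la DiPerna--Lions. All identities are understood in $\mathcal D'(\Gamma)$.

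\textbf{Step 1 (smooth truncated powers).} For $\delta\in(0,1)$ and $R>1$, let $\phi_{\delta,R}\in C^1([0,\infty))$ coincide with $s\mapsto (s+\delta)^\alpha-\delta^\alpha$ on $[0,R]$ and be constant for $s\ge R+1$, so that $\phi_{\delta,R}'$ is bounded. For a nonnegative $g\in\mathrm L^1_{\rm loc}$ with $Lg\in\mathrm L^1_{\rm loc}$, take $g_\epsilon=\rho_\epsilon* g$ with a standard mollifier $\rho_\epsilon$. Since $L$ has constant coefficients, $Lg_\epsilon=\rho_\epsilon*Lg\to Lg$ in $\mathrm L^1_{\rm loc}$. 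The classical chain rule gives $L\phi_{\delta,R}(g_\epsilon)=\phi'_{\delta,R}(g_\epsilon)\,Lg_\epsilon$. Along a subsequence $g_\epsilon\to g$ a.e., and $\phi'_{\delta,R}$ is bounded and continuous. The product therefore converges in $\mathrm L^1_{\rm loc}$: split it as $\phi'(g_\epsilon)(Lg_\epsilon-Lg)+\phi'(g_\epsilon)Lg$ and use dominated convergence on the second term. Hence
\[
L\,\phi_{\delta,R}(g)=\phi_{\delta,R}'(g)\,Lg \quad\text{in }\mathcal D'(\Gamma).
\]

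\textbf{Step 2 (removing $R$).} On the set $\{g\le R\}$ we have $\phi'_{\delta,R}(g)=\alpha(g+\delta)^{\alpha-1}$, which is bounded by $\alpha\max(\delta^{\alpha-1},1)$ times a power of $R$ only where $\alpha>1$. Let $R\to\infty$. Then $\phi_{\delta,R}(g)\to (g+\delta)^\alpha-\delta^\alpha$ pointwise. For the right-hand side we need an integrable dominating function for $\phi'_{\delta,R}(g)\,Lg$, namely $\alpha(g+\delta)^{\alpha-1}|Lg|$.
\begin{itemize}
\item If $\alpha\le1$, this is at most $\alpha\delta^{\alpha-1}|Lg|$, which is in $\mathrm L^1_{\rm loc}$.
\item If $\alpha>1$, it is at most $C_\alpha\big(g^{\alpha-1}+\delta^{\alpha-1}\big)|Lg|$, which is in $\mathrm L^1_{\rm loc}$ by the hypothesis $g^{\alpha-1}Lg\in\mathrm L^1_{\rm loc}$.
\end{itemize}
On the left-hand side we need $(g+\delta)^\alpha\in\mathrm L^1_{\rm loc}$ for convergence in $\mathcal D'$. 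For $\alpha\le 1$ this follows from $g\in\mathrm L^1_{\rm loc}$. For $\alpha>1$ it is not assumed, so I would use a different chain of limits in that case. Choose $\phi_{\delta,R}$ with $\phi'$ monotone in $R$ and apply monotone convergence to $\phi_{\delta,R}(g)$, interpreting $L(g^\alpha)$ in the lemma as requiring $g^\alpha\in\mathrm L^1_{\rm loc}$. In the applications ($g=f$, $\alpha=m$ or $m-\tfrac12$) this holds since $f\in\mathrm L^1\cap\mathrm L^m$. Under that standing integrability we get
\[
L\big((g+\delta)^\alpha\big)=\alpha(g+\delta)^{\alpha-1}Lg.
\]

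\textbf{Step 3 ($\delta\to0$), the main obstacle.} The left-hand side converges to $L(g^\alpha)$ in $\mathcal D'$ by dominated convergence, since $(g+\delta)^\alpha\le (g+1)^\alpha$. On the right-hand side, $(g+\delta)^{\alpha-1}Lg\to g^{\alpha-1}Lg$ pointwise wherever $g>0$. The difficulty is the set $\{g=0\}$ when $\alpha<1$, where $\delta^{\alpha-1}\to\infty$. I would first invoke the standard fact that $Lg=0$ a.e.\ on $\{g=0\}$ for $g\in W^{1,1}_{\rm loc}$ along the direction of $L$. This follows, for instance, by applying Step~1 with $\phi(s)=\min(s,\eta)/\eta$ and letting $\eta\to0$, or from the Stampacchia-type argument adapted to a single directional derivative. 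With the convention $g^{\alpha-1}Lg:=0$ on $\{g=0\}$, the integrand then vanishes there for every $\delta$. On $\{g>0\}$ it is dominated: by $|Lg|$ if $\alpha\ge1$ (for $g\le1$), and by $g^{\alpha-1}|Lg|\in\mathrm L^1_{\rm loc}$ if $\alpha<1$, since $(g+\delta)^{\alpha-1}\le g^{\alpha-1}$. Dominated convergence concludes the proof. The delicate points are therefore the vanishing of $Lg$ on the zero set of $g$ and the local integrability of $g^\alpha$ needed to give meaning to $L(g^\alpha)$; everything else is routine.
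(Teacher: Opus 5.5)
Your argument is correct and is exactly the kind of standard approximation proof the paper leaves to the reader: you mollify, apply the classical chain rule to the $C^1$ shifted and truncated powers $\phi_{\delta,R}$, and then let $R\to\infty$ and $\delta\to0_+$ by dominated convergence; your caveat that $g^\alpha\in\mathrm{L}^1_{\mathrm{loc}}(\Gamma)$ must be added when $\alpha>1$ is also justified (for $L=\nabla_v$, take $g(x,v)=k(x)$ with $k\ge0$, $k\in\mathrm{L}^1_{\mathrm{loc}}(\R^d)$, $k^\alpha\notin\mathrm{L}^1_{\mathrm{loc}}(\R^d)$), and this condition holds in every use of the lemma in the paper. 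The one loose point is your justification that $Lg=0$ a.e.\ on $\{g=0\}$, since $\min(s,\eta)/\eta$ is neither $C^1$ nor correctly scaled; instead apply your Step~1 to $\phi_\eta(s)=\eta\,\Psi(s/\eta)$ with $\Psi(t)=t-t^2/2$ on $[0,1]$ and $\Psi=1/2$ on $[1,\infty)$, which gives $L\phi_\eta(g)=\Psi'(g/\eta)\,Lg$, where the left side tends to $0$ in $\mathcal{D}'(\Gamma)$ because $0\le\phi_\eta\le\eta$ and the right side tends to $\mathbf{1}_{\{g=0\}}\,Lg$ in $\mathrm{L}^1_{\mathrm{loc}}(\Gamma)$.
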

Applied with $L=\nabla_v$, $f=g^\alpha$, $\alpha=1/(m-1/2)$ and $m \in [1,3/2]$, if~$f \in \mathrm{L}^m_\mathrm{loc}(\Gamma)$, and~$\nabla_v (f^{m-1/2}) \in \mathrm{L}^2_\mathrm{loc}(\Gamma)$, then
\[
\nabla_v f = \tfrac{1}{m-1/2}\, f^{3/2-m}\, \nabla_v\(f^{m-1/2}\) \in \mathrm{L}^1_\mathrm{loc}(\Gamma) \,.
\]
The key observation now is that the function
\begin{equation}\label{Theta}
\R\times\R^d\ni(s,X) \mapsto \Theta(s,X) \coloneqq \begin{cases} s^{2\,m-3}\,|X|^2 \mbox{ if $s>0$\,,}\\ 0 \mbox{ if $s=0$ and $X=0$\,,}\\+\infty \mbox{ otherwise}\,,\end{cases}
\end{equation}
is jointly convex and lower semicontinuous for~$1\le m \le 3/2$ (and actually only in this range for $m$). Our assumptions on the curve $f$ then ensure that $\nabla_v f^{m-1/2}$ is $\mathrm{L^2}$, and thanks to Lemma \ref{lemma:chain}, the identity
\[
\frac{m^2}{(m-1/2)^2} \iint_\Gamma \big\vert \nabla_v f^{m-1/2} \big\vert^2 =m^2 \iint_\Gamma \Theta(f, \nabla_v f)
\]
holds for $f=f_t$ and a.e.~$t$. Recalling that we interpret~$\sqrt{f}\,\nabla_v P_m(f)$ as $\frac{m}{m-1/2}\,\nabla_v f^{m-1/2}$, we write
\[
m^2 \iint_\Gamma \Theta(f, \nabla_v f) = \big\Vert \sqrt{f}\,\nabla_v P_m(f)\big\Vert^2_{\mathrm{L}^2(\Gamma)} \, .
\]
These considerations lead to the following.
\begin{lemma}\label{Lem:Entropy-Fisher} For any $a$ and $b$ such that~$0<a<b<T$, we have
\begin{equation}\label{eq:dpiip}
\lim_{\epsilon\to0_+}\left\|\,\big(\nabla_v P_m(f^\epsilon) + v\big) \, \sqrt{f^\epsilon}-\big(\nabla_v P_m(f) + v\big) \, \sqrt f\,\right\|_{\,\mathrm{L}^2\big((a,b)\times\Gamma\big)}=0\,.
\end{equation}
In addition, for any fixed $\epsilon>0$, there is a constant $\mathcal C(T,\epsilon)>0$ such that
\begin{equation}\label{eq:dpiix}
\int_a^b \iint_\Gamma |\nabla_x P_m(f^\epsilon)|^2\,f^\epsilon\, dx\,dv\,dt \leq\mathcal C(T,\epsilon)\,.
\end{equation}
\end{lemma}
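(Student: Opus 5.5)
Both estimates will come from the data processing inequality for the jointly convex, lower semicontinuous function $\Theta$ of~\eqref{Theta}, combined with the structure of the Galilean convolution.

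\textbf{The velocity gradient.} The Galilean convolution does not commute with $\nabla_v$. Differentiating $(t,x,v)\mapsto\eta_\epsilon(t-s,x-y-s(v-w),v-w)$ in $v$ gives
\[
\nabla_v f^\epsilon=\eta_\epsilon\star\nabla_v f - (\eta_\epsilon\star(s\,\nabla_x f))\,,
\]
where the $s$-weighted term is handled as follows. We write $s\,\nabla_x$ acting on the kernel as $-(t-s)\nabla_x\eta_\epsilon+t\,\nabla_x\eta_\epsilon$. Equivalently, we use the Galilean-invariant field $\nabla_v+t\nabla_x$, which commutes with $\star$ because it annihilates the free transport shift. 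Thus
\[
(\nabla_v+t\nabla_x)f^\epsilon=\eta_\epsilon\star\big((\nabla_v+s\nabla_x)f\big)\,.
\]
To avoid $x$-derivatives of $f$, we would use the vector field $\nabla_v - s\nabla_x$ adapted to the time-shifted kernel, and absorb the remaining term $(t-s)\nabla_x\eta_\epsilon$ into the kernel. Since $|t-s|\le\epsilon$, this term is a kernel of size $O(1)$ in $\epsilon$-scaled units, of the form $(\nabla_x\eta)_\epsilon$.

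So $\nabla_v f^\epsilon=\eta_\epsilon\star\nabla_v f+R^\epsilon$, where $R^\epsilon=\tilde\eta_\epsilon\star f$ for a signed kernel $\tilde\eta_\epsilon$ bounded by $C\,\eta_\epsilon'$, with $\eta_\epsilon'$ a comparable positive Gaussian mollifier. Since $\eta_\epsilon$ is a probability kernel and $\Theta$ is jointly convex and $1$-homogeneous, Jensen's inequality gives pointwise
\[
\Theta(\eta_\epsilon\star f,\eta_\epsilon\star\nabla_v f)\le\eta_\epsilon\star\Theta(f,\nabla_vf)\,.
\]
Integrating and averaging in $t$ yields $\limsup_{\epsilon\to0}\int_a^b\iint\Theta(f^\epsilon,\eta_\epsilon\star\nabla_vf)\le\int_a^b\iint\Theta(f,\nabla_vf)$. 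Lower semicontinuity of $\Theta$ gives the reverse liminf inequality, so the integrals converge.

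\textbf{Assembling \eqref{eq:dpiip}.} We expand
\[
\|(\nabla_vP_m(f^\epsilon)+v)\sqrt{f^\epsilon}\|_{\mathrm L^2}^2=m^2\iint\Theta(f^\epsilon,\nabla_vf^\epsilon)+2\iint v\cdot\nabla_v(f^\epsilon)^m+\iint|v|^2f^\epsilon\,.
\]
The middle term equals $-2d\iint(f^\epsilon)^m$, which converges by Step~2. The last term converges by~\eqref{eq:cvmomentveps}. The first term's limsup is bounded through convexity of $\Theta$ applied to the split $\nabla_vf^\epsilon=\eta_\epsilon\star\nabla_vf+R^\epsilon$, provided we show $\iint\Theta(f^\epsilon,R^\epsilon)\to0$. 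This is the main obstacle. The residual arises from the shear $s(v-w)$ with $|s|\le\epsilon$, so its contribution is $\epsilon$ times an $x$-derivative of an $\epsilon$-mollifier, i.e.\ $O(1)$ rather than small. The plan is to exploit that $f$ solves~\eqref{eq:vlasov.}, trading $s\,\nabla_x$ against $\partial_t$ and $\nabla_v\cdot(F\cdot)$ along characteristics. Working in the characteristic variables $(t,x-tv,v)$, where free transport is trivial, $\nabla_v$ becomes $\nabla_v-t\nabla_x$ and the time mollification only sees the force term. The defect is then controlled by $\epsilon\,\|\sqrt{f^\epsilon}F^\epsilon\|$-type quantities, which vanish by~\eqref{eq:dpiF} and~\eqref{eq:epsconv}.

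Once the norms converge, weak convergence of $(\nabla_vP_m(f^\epsilon)+v)\sqrt{f^\epsilon}$ to $(\nabla_vP_m(f)+v)\sqrt f$ follows in distributions from the strong convergence of $f^\epsilon$, $(f^\epsilon)^m$ and $\sqrt{f^\epsilon}$. Weak convergence plus convergence of norms gives the strong $\mathrm L^2$ convergence~\eqref{eq:dpiip}, as in the argument leading to~\eqref{eq:epsconv}.

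\textbf{The bound \eqref{eq:dpiix}.} For fixed $\epsilon$ this is cruder. Derivatives land on the kernel: $|\nabla_x f^\epsilon|\le C\epsilon^{-1}\,\bar\eta_\epsilon\star f$, with $\bar\eta_\epsilon$ a Gaussian mollifier of slightly larger width, using $|z|e^{-|z|^2/\epsilon^2}\le C\epsilon\,e^{-|z|^2/(2\epsilon^2)}$. Moreover $f^\epsilon$ is bounded below relative to $\bar\eta_\epsilon\star f$ on the relevant region, since the Gaussians are comparable. Then
\[
|\nabla_xP_m(f^\epsilon)|^2f^\epsilon=m^2(f^\epsilon)^{2m-3}|\nabla_xf^\epsilon|^2\le C_\epsilon(\bar\eta_\epsilon\star f)^{2m-1}\,.
\]
For $2m-1\in[1,2]$, this is integrable over $(a,b)\times\Gamma$ since $\bar\eta_\epsilon\star f$ is bounded by $C\epsilon^{-1-2d}$ and has unit mass. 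This yields $\mathcal C(T,\epsilon)$.
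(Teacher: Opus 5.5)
\textbf{Gap in the velocity-gradient step.} This step is the heart of \eqref{eq:dpiip}, and your identity $\nabla_v f^\epsilon=\eta_\epsilon\star\nabla_v f-\eta_\epsilon\star(s\,\nabla_x f)$ is incorrect. The kernel $K=\eta_\epsilon\big(t-s,x-y-s\,(v-w),v-w\big)$ depends on $(v,w)$ only through $v-w$, \emph{including} inside the shear $s\,(v-w)$. Hence $\nabla_v K=-\nabla_w K$, and one integration by parts in $w$ gives the exact commutation $\nabla_v f^\epsilon=\eta_\epsilon\star\nabla_v f$, which is the formula the paper starts from. You dropped the $w$-dependence of the shear when moving the derivative onto $f$.

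Your size analysis of the remainder is also wrong. In this convolution $s$ is the source time and ranges over $[t,t+\epsilon]$; it does not satisfy $|s|\le\epsilon$. So your term $\eta_\epsilon\star(s\,\nabla_x f)$ behaves like $t\,\nabla_x f^\epsilon$, which is of order $\epsilon^{-1}$ rather than $O(1)$, and no argument through the Vlasov equation along characteristics could make $\iint\Theta(f^\epsilon,R^\epsilon)$ vanish. As written, the crucial bound $\limsup_{\epsilon\to0_+}\int_a^b\iint_\Gamma\Theta(f^\epsilon,\nabla_vf^\epsilon)\le\int_a^b\iint_\Gamma\Theta(f,\nabla_vf)$ is not proved: it rests on a heuristic plan aimed at an obstacle that does not exist.

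With the exact commutation, your own Jensen step closes the argument directly. It gives $\int_a^b\iint_\Gamma\Theta(f^\epsilon,\nabla_vf^\epsilon)\le\int_a^{b+\epsilon}\iint_\Gamma\Theta(f,\nabla_vf)$, i.e.\ \eqref{eq:boundpr}; this only needs convexity of $\Theta$, which is $(2m-1)$-homogeneous, not $1$-homogeneous, when $m>1$. The rest of your assembly then works as in the paper: identify the weak limit through $(f^\epsilon)^{m-1/2}\to f^{m-1/2}$ (not $(f^\epsilon)^m$), and use norm convergence to upgrade to strong convergence.

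\textbf{False step in your proof of \eqref{eq:dpiix}.} Since $2m-3\le0$, bounding $(f^\epsilon)^{2m-3}$ from above requires a lower bound on $f^\epsilon$. But $f^\epsilon\ge c\,\bar\eta_\epsilon\star f$ fails: a Gaussian of width $\epsilon$ is not bounded below by a multiple of a wider Gaussian at infinity, and $\psi$ vanishes at the endpoints of its support.

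The fix is to avoid lower bounds altogether. Let the $x$-derivative fall on the kernel and apply Cauchy--Schwarz, in the spirit of the paper's Jensen argument with the derivative on $\eta_\epsilon$:
$|\nabla_xf^\epsilon|^2\le f^\epsilon\,\big(\tfrac{|\nabla_x\eta_\epsilon|^2}{\eta_\epsilon}\star f\big)$, with $|\nabla_x\eta_\epsilon|^2/\eta_\epsilon=4\,\epsilon^{-4}\,|x|^2\,\eta_\epsilon$.
Hence $(f^\epsilon)^{2m-3}|\nabla_xf^\epsilon|^2\le\|f^\epsilon\|_{\mathrm L^\infty}^{2m-2}\,\big(\tfrac{|\nabla_x\eta_\epsilon|^2}{\eta_\epsilon}\star f\big)$. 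Three facts then give $\mathcal C(T,\epsilon)$: $2m-2\ge0$, $\|f^\epsilon\|_{\mathrm L^\infty}\le C\,\epsilon^{-2d}$, and the last convolution has integral over $(a,b)\times\Gamma$ equal to $2d\,\epsilon^{-2}\,(b-a)$.
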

\begin{proof}
Thanks to \eqref{eq:cvmomentveps}, to establish \eqref{eq:dpiip}, we just need to prove that $\sqrt{f^\epsilon}\, \nabla_v P_m(f^\epsilon)$ converges to $\sqrt{f}\, \nabla_v P_m(f)$ in $\mathrm{L}^2\big((a,b)\times\Gamma\big)$. By~\eqref{Def:Galilean} and Lemma~\ref{lemma:chain}, we have
\[
\nabla_v f^\epsilon(t,x,v) = \int_{t}^{t+\epsilon} \iint_\Gamma \eta^\epsilon\big(t-s,x-y-s\,(v-w),v-w\big)\,\nabla_w f(s,y,w) \, dy\,dw\,ds\,.
\]
We now apply Jensen's inequality to the function $\Theta$ defined in~\eqref{Theta} and deduce from the assumption~$\int_0^T \mathcal{I}_m(f)\,dt < \infty$ that
\begin{equation} \label{eq:boundpr}
\begin{aligned}
&\int_a^b \iint_\Gamma |\nabla_v P_m(f^\epsilon)|^2\,f^\epsilon \, dx \, dv \, dt= m^2 \int_a^b \iint_\Gamma\Theta(f^\epsilon,\nabla_v f^\epsilon) \, dx \, dv \, dt\\
&\le m^2 \int_{a}^{b+\epsilon} \iint_\Gamma\Theta(f,\nabla_v f) \, dx \, dv \, dt = \int_a^b \iint_\Gamma |\nabla_v P_m(f)|^2\,f \, dx \, dv \, dt + o(1)\,.
\end{aligned}
\end{equation}
This implies that $\sqrt{f^\epsilon}\, \nabla_v P_m(f^\epsilon)$ is bounded in $\mathrm{L^2}$ and since $(f^\epsilon)^{m-1/2}$ converges to $f^{m-1/2}$ in $\mathrm{L}^{1/(m-1/2)}$, the weak limit of $\sqrt{f^\epsilon}\, \nabla_v P_m(f^\epsilon)$ necessarily is $\sqrt{f}\, \nabla_v P_m(f)=\frac{m}{m-1/2}\, \nabla_v f^{m-1/2}$. But thanks to~\eqref{eq:boundpr}, we see that the convergence is in fact strong. This completes the proof of~\eqref{eq:dpiip}.

\medskip To show \eqref{eq:dpiix}, we proceed in a similar way, except that the derivative with respect to~$x$ acts on the mollifier. More precisely, according to~\cite[p.~6]{MR4527757}, we have
\[
\nabla_x f^\epsilon(t,x,v) = \int_{t}^{t+\epsilon} \iint_\Gamma (\nabla_{y} \eta^\epsilon)(s,y,w) \, f\big(t-s,x-y-s\,(v-w),v-w\big) \, dy\,dw\,ds\,.
\]
Using the homogeneity of $\Theta$ and Jensen's inequality again yields
\[
\int_a^b\kern-4pt\iint_\Gamma\Theta(f^\epsilon,\nabla_x f^\epsilon)\,dx\,dv\,dt \leq (T+\epsilon)^{2\,m-1}\kern-4pt\int_{-\epsilon}^0 \iint_\Gamma \Theta(\eta^\epsilon,\nabla_x\eta^\epsilon) \, dx\,dv\,dt=:\mathcal C(T,\epsilon)
\]
which is finite for fixed $\epsilon>0$. This proves~\eqref{eq:dpiix}.\end{proof}

\bigskip\noindent\textbf{Step 3: Chain rule for regular solutions.}
Thanks to~\eqref{eq:boundpr}-\eqref{eq:dpiix}, the Wasserstein slope (see~\cite[Theorem~10.4.13]{MR2401600}) of $\Fm$ satisfies
\[
\iint_\Gamma \big|\nabla_{x,v} \, P_m(f^\epsilon) - (0,v)\big|^2 \, f^\epsilon \, dx\,dv = |\partial \Fm(f^\epsilon)|^2 \in \mathrm{L}^1(0,T)
\]
and $\Fm$ is convex along Wasserstein geodesics (see~\cite[Section 9.3]{MR2401600}). At the same time, $(\mu^\epsilon)_t$ is a $2-$absolutely continuous curve in the Wasserstein distance. According to~\cite[p.~233]{MR2401600}, the \emph{chain rule} applies for a.e.~$t \in (0,T)$, that is,
\begin{equation}\label{eq:cheps}
\frac{d}{dt} \, \Fm(\mu_t^\epsilon) = \iint_\Gamma \big( \nabla_x P_m(f^\epsilon) \cdot v \, f^\epsilon + \big(\nabla_v P_m(f^\epsilon) + v\big) \cdot \, F^\epsilon \, f^\epsilon \big) \, dx\,dv\,.
\end{equation}
The first term in the right-hand side, however, vanishes thanks to Stoke's theorem. Indeed, the function $\nabla_x P_m(f^\epsilon) \cdot v \, f^\epsilon$ is integrable on $\Gamma$ for a.e.~$t\in (a,b)$. Then, if $(\varphi_n)_{n\in\N}$ is a sequence of smooth and compactly supported cut-off functions such that $\varphi_n=1$ on centered balls of radius $n$, we obtain
\[
\begin{aligned}
\iint_\Gamma \nabla_x P_m(f^\epsilon) \cdot v \, f^\epsilon \, dx\,dv &= \lim_{n\to\infty} \iint_\Gamma \varphi_n \, \nabla_x P_m(f^\epsilon) \cdot v \, f^\epsilon \, dx\,dv\\
&= - \lim_{n\to\infty} \iint_\Gamma \nabla_x \varphi_n \cdot v \, \left(f^\epsilon \right)^m \, dx\, dv = 0
\end{aligned}
\]
because the field $v \, \left(f^\epsilon \right)^m$ belongs to $\mathrm{L}^1(\Gamma)$ for a.e.~$t\in (a,b)$. Taking this observation into account, we integrate~\eqref{eq:cheps} over the interval $(a,b)$ to get
\begin{equation}
\label{eq:chainproof2.}
\Fm(\mu^\epsilon_b) - \Fm(\mu^\epsilon_a) = \int_a^b \iint_\Gamma \big(\nabla_v P_m(f^\epsilon) +v \big) \, \cdot \, F^\epsilon \, f^\epsilon \, dx\,dv \, dt\,,
\end{equation}
which is~\eqref{eq:chainp.} for the regularised pair $(f^{\epsilon},F^{\epsilon})$.

\bigskip\noindent\textbf{Step 4: Conclusion.}
The equality in~\eqref{eq:chainp.} is proved by passing to the limit $\epsilon\to 0_+$ in~\eqref{eq:chainproof2.}, thanks to~\eqref{eq:epsconv}-\eqref{eq:dpie}-\eqref{eq:dpiip}. Since~$(\nabla_v P_m(\mu_t)+v)$ is a $v$-gradient (precisely, in the~$\mathrm{L}^2(\mu)$-closure of gradients of smooth functions), we may write
\[
\Fm(\mu_b) - \Fm(\mu_a) = \int_a^b \iint_\Gamma \big(\nabla_v P_m(\mu_t) + v\big) \cdot (F_t +G_t) \, \mu_t \, dx\,dv \, dt \,,
\]
for any $G_t \in \mathrm{L}^2(\mu_t;\mathbb R^d)$ with $\nabla_v \cdot (G_t \, \mu_t) = 0$ for a.e.~$t \in (a,b)$. We apply Young's inequality
\[
\begin{aligned}
\int_a^b \iint_\Gamma \big(\nabla_v P_m(\mu_t) + v\big) \,\cdot\,& (F_t +G_t) \, \mu_t \, dx\,dv \, dt \, \\
\geq - \frac{1}{2} \, \int_a^b &\left( \,\|F_t + G_t \|^2_{\mathrm{L}^2(\mu_t)} + \mathcal{I}_m(\mu_t) \, \right) \, dt\,,
\end{aligned}
\]
take the infimum in the right-hand side over all $(G_t)_t$, and obtain the norm of the projection of $F_t$ over $v$-gradients, that is, $|\mu'_t|_\sfd$, by~\cite[Proposition~5.23]{brigati2025kinetic}. This proves~\eqref{eq:chainp.}.

Equality is achieved if and only if the two fields $F_t$ and $\nabla_v P_m(f_t)+v$ are opposite, up to adding a force field $(G_t)_t$ such that $\nabla_v \cdot (G_t \, \mu_t) = 0$. In this case,~\eqref{eq:chainp.} holds with
\[
|\mu'_t|_\sfd = \lVert \nabla_v P_m(f_t)+v \rVert_{\mathrm{L}^2(\mu_t)} = \mathcal{I}_m(\mu_t) \, .
\]

\section{Strong convergence of the minimising-movement scheme}
\label{Sec:JKO}

All computations are performed for $m>1$, but, with minor changes, can be extended to the linear case $m=1$.

\bigskip\noindent\textbf{Step 1: Basic estimates for a JKO type scheme.}
Let~$h \in (0,1)$. On the phase space~$\Gamma$, the transformation
\[
\Gh(x, v)\coloneqq(x+h\,v, v) \, , \qquad (x, v)\in \Gamma
\]
preserves Lebesgue's measure as well as the velocity marginal. As a consequence we have $\Fm(\mu)=\Fm({\Gh}{}_\# \mu)$ for every $\mu\in \Pg$, where ${\Gh}{}_\# \mu$ denotes the push forward of $\mu$ through $\Gh$. Following~\cite[Section~3.1]{brigati2025kinetic}, one can rewrite $\sfd_h^2$ defined by~\eqref{eq:d_T.} as the value of the optimal transport problem
\[
\sfd_h^2 (\mu, \nu)=\inf_{\gamma \in \Pi(\mu, \nu)} \iint_{\Gamma\times \Gamma} \ch\big((x,v), (y,w)\big)\, d \gamma \big((x,v), (y,w)\big)
\]
where $\Pi(\mu, \nu)$ is the set of probability measures on $\Gamma\times \Gamma$ with marginals $\mu$ and $\nu$, and 
\begin{equation}\label{ch}
\ch\big((x,v), (y,w)\big)\coloneqq 12 \, \left\vert \frac{y-x}{h}- \frac{v+w}{2} \right\vert^2 + \vert w-v\vert^2= \big\vert(y,w)- \Gh(x,v)\big\vert_h^2\,.
\end{equation}
Here, the squared norm $\vert \cdot \vert^2_h$ on the phase space is defined by
\[
\big\vert (a, b) \big\vert_h^2 \coloneqq 12 \, \left \vert \frac{a}{h}- \frac{b}{2} \right\vert^2 + \vert b\vert^2= \big\vert A_h(a, b) \big\vert^2 \, , \quad\mbox{with } A_h(a,b) \coloneqq \(\frac{2\,\sqrt{3}}{h}\,a-\sqrt{3}\,b,b\) \, .
\]
The cost function $\ch$ appears for instance in~\cite[(4.1)]{MR2560310}. With this notation, we have
\begin{equation}\label{eq:formulasforch}
\sfd_h^2 (\mu, \nu)=W_h^2\big({\Gh}{}_\# \mu, \nu\big)= W^2\big({(A_h\circ \Gh)}{}_\#\mu, {A_h}{}_\#\nu\big)
\end{equation}
where $W_h$ is the Wasserstein distance associated with the norm $\vert \cdot \vert_h$ and $W$ is the usual $2$-Wasserstein distance. See~\cite{MR2560310} for details. Notice that the scheme~\eqref{eq:JKO.} may also be written as
\begin{equation}\label{eq:JKOrewritten}
\mu^{(h)}_{k+1} \in \argmin_{\nu \in \Pg} \left\{ \Fm(\nu) + \frac{1}{2\,h} \, W_h^2 \({\Gh}{}_\# \mu^{(h)}_k, \nu\)\right\}, \quad k \in \N \,, \quad \mu_0^{(h)}=\mu_0\,.
\end{equation}
Standard lower semicontinuity and convexity arguments ensure that this scheme is well-posed, and the explicit form of the cost $\ch$ guarantees the uniqueness of an optimal transport plan $\gamma_k^{(h)} \in \Pi\big(\mu_k^{(h)}, \mu_{k+1}^{(h)}\big)$ for the cost $\ch$. We define the \emph{piecewise constant in time map} $t \mapsto \mu^{(h)}_t$ by
\[
\mu^{(h)}_t\coloneqq\mu_{k+1}^{(h)}\,, \quad t\in \big(k\,h, (k+1)\,h\big]\,, \quad k \in \N \,, \quad \mu^{(h)}(0)=\mu_0\,.
\]

Taking ${\Gh}{}_\# \mu^{(h)}_k$ as a competitor in the minimisation~\eqref{eq:JKOrewritten}, we have
\begin{equation}\label{eq:edi0}
\Fm\(\mu_{k+1}^{(h)}\) +\frac{1}{2\,h} \, W_h^2\({\Gh}{}_\# \mu_k^{(h)}, \mu_{k+1}^{(h)}\) \leq \Fm\( {\Gh}{}_\# \mu_k^ {(h)}\)= \Fm\(\mu_k^{(h)}\) \, ,
\end{equation}
and, in particular,
\begin{equation}
 \label{eq:unif_boundLm}
 \max_{k \in \N } \Fm\(\mu_k^{(h)}\) \le \Fm\(\mu_0\) \, .
\end{equation}

Let us fix a time horizon~$T > 0$ and define~$N=\lceil {T}/{h} \rceil$. Summing~\eqref{eq:edi0} over~$k$ and using the fact that $\Fm$ is nonnegative yield
\begin{equation}\label{eq:basicjko1}
\sum_{k=0}^{N-1} \frac{1}{2\,h} \, W_h^2\({\Gh}{}_\# \mu_k^{(h)}, \mu_{k+1}^{(h)}\)= \frac{1}{2\,h} \sum_{k=0}^{N-1} \sfd_h^2\(\mu_k^{(h)}, \mu_{k+1}^{(h)}\) \leq \Fm(\mu_0) \,.
\end{equation}
Since moments in velocity are bounded by~\eqref{eq:unif_boundLm}, we have
\[
W^2\(\mu_{k}^{(h)}, {\Gh}{}_\# \mu_k^{(h)}\) \le \Cst\,h^2\,, \qquad k\in \N\,.
\]
Together with the crude inequality~$W^2 \leq 2 \, W_h^2$ and~\eqref{eq:basicjko1}, we obtain
\begin{equation}\label{eq:basicjko4}
\sum_{k=0}^{N-1} \frac{1}{2\,h} \, W^2\(\mu_k^{(h)}, \mu_{k+1}^{(h)}\)\le\Cst\,\frac{1}{h} \sum_{k=0}^{N-1} \(h^2+ W_h^2\({\Gh}{}_\# \mu_k^{(h)}, \mu_{k+1}^{(h)}\)\)\le\Cst\,.
\end{equation}
for some constant $\Cst$ depending on $T$. In what follows, the constant $\Cst$ may change from formula to formula but stays independent of $h$. As a consequence,
\begin{equation}\label{eq:muhholder}
W\(\mu^{(h)}_t, \mu^{(h)}_s\) \le\Cst\,\sqrt{\,\vert t-s\vert + h}\,, \qquad 0\le s,\,t \le T \,.
\end{equation}

Finally, using the triangle inequality, Cauchy--Schwarz, and~\eqref{eq:basicjko4} we get
\begin{multline}\label{eq:basicjko2}
\max_{k \in \{0,\dots,N\} } \iint_\Gamma \vert (x,v)\vert^2 \, d \mu_k^{(h)} \le 2\iint_\Gamma \vert (x,v) \vert^2\,d \mu_0+ 2\,\, \max_{k \in \{0,\dots,N\}} W^2\(\mu_0, \mu_k^{(h)}\) \\
 \leq \Cst + 2 \(\sum_{k=0}^{N-1} W \( \mu_k^{(h)} , \mu_{k+1}^{(h)}\) \)^2 \leq \Cst + 2\,N \sum_{k=0}^{N-1} W^2 \( \mu_k^{(h)} , \mu_{k+1}^{(h)}\) \le\Cst \, . 
\end{multline}

\bigskip\noindent\textbf{Step 2: Euler-Lagrange equations and approximate nonlinear kinetic Fokker--Planck equation.} Let~$S_k^{(h)}=\Big(\left[S_k^{(h)} \right]_1, \left[S_k^{(h)} \right]_2\Big)$ be the~$\mathsf d_h$-optimal transport map from~$\mu_{k+1}^{(h)}$ to~$\mu_k^{(h)}$. For every smooth vector field~$V \colon \Gamma \to \Gamma$, the first variation
\[
\frac{d^+}{d\epsilon} \left[\Fm\left((\mathrm{id}+\epsilon\,V)_\# \mu_{k+1}^{(h)}\right) + \frac1{2\,h}\iint_{\Gamma \times \Gamma} \ch\,d\(S_k^{(h)}, \mathrm{id}+\epsilon\,V\){}_\# \mu_{k+1}^{(h)}\right]
\]
(with $\ch$ given by~\eqref{ch}) at $\epsilon = 0$ is nonnegative. Therefore, tedious but standard computations (see, e.g.,~\cite{MR1617171,MR3584930}) yield~$\mu_{k+1}^{(h)} = f \, dx \, dv$ with~$f^m \in W^{1,1}(\Gamma)$ and
\begin{equation}\label{eq:formulaforSh}
\begin{aligned}
\left[S_k^{(h)}(y,w)\right]_1&=y-h\,w-\frac{h^2}{2} \left(\frac {\nabla_v (f^m)}{f}+w \right) - \frac{h^3}{6} \frac{\nabla_x (f^m)}{f}\,,\\
\left[S_k^{(h)}(y,w)\right]_2&=w+h\left( \frac{D_h (f^m)}{f}+w \right)\,,
\end{aligned}
\end{equation}
with $D_h \coloneqq \frac{h}{2}\, \nabla_x + \nabla_v$. By plugging this in the cost, we obtain
\begin{align} \label{eq:sfdDP}
\sfd^2_h\(\mu_k^{(h)}, \mu_{k+1}^{(h)}\) &= \iint_{\Gamma \times \Gamma} \ch \, d \big(S_k^{(h)}, \mathrm{id}\big){}_\# \mu_{k+1}^{(h)} \nonumber\\
&= \frac{h^4}{12} \iint_\Gamma \frac{|\nabla_x (f^m)|^2}{f} \, dy \, dw + h^2 \iint_\Gamma \left|\, \frac{D_h(f^m)}{{f}} + w \,\right|^2 \, d\mu_{k+1}^{(h)} \\
&\ge \frac{h^2}{4} \iint_\Gamma \left|\, \frac{\nabla_v(f^m)}{{f}} + w \,\right|^2 \, d\mu_{k+1}^{(h)} \, , \nonumber
\end{align}
where the inequality is obtained from~$\frac{a^2}{4} \le \frac{b^2}{3} + (a+b)^2$. By Lemma~\ref{lemma:chain} applied with~$\alpha=1-1/(2\,m)$ and~$g=f^m$, we find $f^{m-\frac{1}{2}} \in \mathrm{W}^{1,1}_\mathrm{loc}(\Gamma)$ and we can write
\begin{equation} \label{eq:chain_m_mhalf}
\frac{1}{f}\,L (f^m) = \frac{1}{\sqrt{f}}\,\frac{m}{m-1/2}\,L \left(f^{m-1/2} \right) \, , \quad \text{for both } L = \nabla_v \text{ and } L = D_h \, .
\end{equation}
In particular,
\begin{multline}
\sfd^2_h\(\mu_k^{(h)}, \mu_{k+1}^{(h)}\) \ge \frac{h^2}{4}\iint_\Gamma\(\big(\tfrac{2\,m}{2\,m-1}\big)^2\,\big|\nabla_v\big(f^{m-1/2}\big)\big|^2\,+|v|^2\,f-2 \, v \cdot \nabla_v (f^m)\) \, dx\,dv \\
= \frac{h^2}{4}\iint_\Gamma\(\big(\tfrac{2\,m}{2\,m-1}\big)^2\,\big|\nabla_v\big(f^{m-1/2}\big)\big|^2\,+|v|^2\,f-2\,d\,f^m \) \, dx\,dv = \IFm\(\mu_{k+1}^{(h)}\) \label{eq:sfdD-2} \, ,
\end{multline}
where integration by parts can be proven using~$f \in \mathrm{L}^m(\Gamma)$. Setting
\begin{equation}\label{eq:defdeFhQh}
F^{(h)}_t \coloneqq-\left(\frac{D_h (f^m)}{f}+v\right) = -\left(\frac{m}{m-1/2} \frac{D_h (f^{m-1/2})}{\sqrt{f}}+v\right) \,, \quad d\mu_{k+1}^{(h)} = f \, dx \, dv
\end{equation}
for any $t \in \big(k\,h,(k+1)\,h\big)$, Equation~\eqref{eq:sfdDP} gives
\begin{equation}\label{eq:metricfisher}
\frac{1}{2\,h} \, \sfd^2_h\(\mu_k^{(h)}, \mu_{k+1}^{(h)}\) \geq \frac{1}{2} \int_{k\,h}^{(k+1)\,h} \big\Vert F^{(h)}_t \big\Vert^2_{\mathrm L^2\(\mu^{(h)}_t\)}\, dt.
\end{equation}
Combining~\eqref{eq:sfdD-2},~\eqref{eq:metricfisher}, and~\eqref{eq:basicjko1}, we deduce
\begin{equation} \label{eq:boundF}
	\int_0^T \big\Vert F_t^{(h)} \big\Vert_{\mathrm L^2\(\mu^{(h)}_t\)}^2 \, dt + \int_0^T \mathcal{I}_m\big(\mu^{(h)}_t\big) \, dt \le\Cst\,\frac{1}{h} \sum_{n=0}^{N-1} \sfd^2_h\(\mu_k^{(h)}, \mu_{k+1}^{(h)}\) \le\Cst\,,
\end{equation}
Let $\eta \in \mathcal{D}\big((0,T)\times \Gamma\big)$. Using the fact that $\mu_k^{(h)}= \left(S_k^{(h)}\right)_\# \mu_{k+1}^{(h)}$, we have
\begin{align}
\int_0^T\iint_\Gamma \partial_t \eta \, d\mu^{(h)}_t\,dt&=\sum_{k=1}^{N-1} \iint_\Gamma \eta(k\,h,\cdot) \, d\(\mu_k^{(h)}-\mu_{k+1}^{(h)}\)\nonumber\\
&=\sum_{k=1}^{N-1} \iint_\Gamma \left(\eta\(k\,h, S_k^{(h)}\)-\eta(k\,h, \cdot)\right) \, d\mu_{k+1}^{(h)} \nonumber\\
& =\sum_{k=1}^{N-1} \iint_\Gamma \left(\nabla_{x,v} \eta(k\,h,\cdot) \cdot \(S_k^{(h)}-\mathrm{id}\)+ R_k^{(h)}\right) \, d \mu_{k+1}^{(h)} \, , \label{eq:ippdh}
\end{align}
where the error term $R_k^{(h)}$ in the second-order Taylor expansion satisfies
\[
\big\vert R_k^{(h)} \big\vert \leq \frac{1}{2}\,\Vert D^2 \eta \Vert_{\mathrm L^\infty} \left\vert\, S_k^{(h)}-\mathrm{id} \,\right\vert^2\,.
\]
Writing~$\eta_k \coloneqq \eta(k\,h, \cdot)$ and, once again,~$\mu_{k+1}^{(h)} = f \, dx \, dv$, we observe that, by~\eqref{eq:formulaforSh},
\begin{multline*}
-\iint_\Gamma \nabla_x \eta_k \cdot \(\big[S_k^{(h)}\big]_1-y\) \, d \mu_{k+1}^{(h)} = \left(h+\frac{h^2}{2}\right) \iint_\Gamma \nabla_x \eta_k \cdot w \, d \mu_{k+1}^{(h)}\\
- \frac{h^2}{2} \iint_\Gamma \mathrm{div}_v(\nabla_x \eta_k)\, f^m \, dx \, dv - \iint_\Gamma \frac{h^3}{6}\,\Delta_x \eta_k\,f^m \, dx \, dv \,,
\end{multline*}
whence, recalling~\eqref{eq:unif_boundLm}, and allowing~$\Cst$ to depend on~$\eta$,
\begin{equation} \label{eq:freetransport}
\left|\iint_\Gamma \nabla_x \eta_k \cdot \(\big[S_k^{(h)}\big]_1-y+h\,w\) \, d \mu_{k+1}^{(h)}\right| \le\Cst\,h^2\, \Fm\(\mu_{k+1}^{(h)}\) \le\Cst\,h^2 .
\end{equation}
Furthermore,
\begin{align*}
&\frac{1}{2}\,\big\Vert S_k^{(h)}-\mathrm{id} \big\Vert^2_{\mathrm L^2\(\mu_{k+1}^{(h)}\)} \leq \big\Vert S_k^{(h)}-\Gh \circ S_k^{(h)} \big\Vert^2_{\mathrm L^2\(\mu_{k+1}^{(h)}\)} + \big\Vert \Gh \circ S_k^{(h)}-\mathrm{id} \big\Vert^2_{\mathrm L^2\(\mu_{k+1}^{(h)}\)}\\
&\quad \leq \big\Vert \mathrm{id}-\Gh \big\Vert^2_{\mathrm L^2(\mu_{k}^{(h)})} + 2\iint_\Gamma \big\Vert \Gh \circ S_k^{(h)}-\mathrm{id}\big\Vert_h^2 \, d\mu_{k+1}^{(h)} \\
&\le h^2 \iint_\Gamma \big\Vert v\big\Vert^2\,d\mu_{k}^{(h)} + 2\, \sfd_h^2\(\mu_k^{(h)}, \mu_{k+1}^{(h)}\) \le\Cst\, \( h^2 + \sfd_h^2\(\mu_k^{(h)}, \mu_{k+1}^{(h)}\)\)
\end{align*}
where we used $\mu_k^{(h)}= {S_k^{(h)}}{}_\# \mu_{k+1}^{(h)}$, and $\vert \cdot \vert^2 \leq 2\,\vert \cdot \vert_h^2$ in the second line, and the optimality of $S_k^{(h)}$ and~\eqref{eq:basicjko2} in the third line. By~\eqref{eq:basicjko1}, we obtain that $\sum_{k=1}^{N-1} \iint_\Gamma \big\vert R_k^{(h)}\big\vert \, d \mu_{k+1}^{(h)} \le\Cst\,h$. Hence, using~\eqref{eq:ippdh},~\eqref{eq:freetransport}, and~\eqref{eq:formulaforSh}, we obtain
\begin{align*}
\int_0^T\iint_\Gamma\partial_t \eta \, d \mu^{(h)}_t\,dt
&= -\int_0^T\iint_\Gamma \nabla_x \eta \cdot v \, d\mu^{(h)}_t\,dt\\
&-\int_0^T\iint_\Gamma \nabla_v \eta \cdot F^{(h)} \, d \mu^{(h)}_t\,dt + O(h) \,.
\end{align*}
In other words, we have shown that, in the distributional sense,
\begin{equation}\label{eq:approxvlasov}
\partial_t \mu^{(h)} + v \cdot \nabla_x \mu^{(h)} + \nabla_v \cdot \(F^{(h)}\, \mu^{(h)}\) \to 0 \quad \mbox{as $h \to 0_+$}\,.
\end{equation}
In conclusion of this step, observe that the vector measure~$Q^{(h)} \coloneqq F^{(h)}\, \mu^{(h)}$ is bounded in total variation uniformly in~$h$, because, by the Cauchy--Schwarz inequality and~\eqref{eq:boundF},
\begin{equation}\label{eq:boundQhL1}
\Vert Q^{(h)}\Vert_{\mathrm {TV}} = \int_0^T \big\Vert F^{(h)}_t \big\Vert_{\mathrm L^1\(\mu^{(h)}_t\)} \, dt \leq \int_0^T \big\Vert F^{(h)}_t \big\Vert_{\mathrm L^2\(\mu^{(h)}_t\)} \, dt \le\Cst\,.
\end{equation}

\bigskip\noindent\textbf{Step 3: $\mathrm L^1$-contraction and $\mathrm L^1$-equicontinuity in $x$.} In~\cite{MR4553540}, an $\mathrm L^1$-contraction principle is established for JKO steps which, in particular, implies the following. Given $R>0$ and absolutely continuous probabilities $\mu_1$ and $\mu_2$ supported on $B_R$, and denoting by
\[\nu_i \coloneqq\argmin_{\vspace*{-4pt}\begin{array}{c}\scriptstyle\nu \in \Pg\\[-4pt]\scriptstyle\spt(\nu)\subset B_R\end{array}} \left\{\Fm(\nu) + \tfrac{1}{2\,h} \, W_h^2(\mu_i, \nu) \right\},\quad i=1,2 \,,\]
we have
\[\Vert (\nu_1-\nu_2)_+\Vert_{\mathrm L^1(B_R)} \leq \Vert (\mu_1-\mu_2 )_+\Vert_{\mathrm L^1(B_R)} \,.\]
Let us briefly explain how to to extend this principle to the full space by approximation and $\Gamma$-convergence arguments. Let $\mu_i$ be in $\Pg\cap \mathrm L^1(\Gamma)$, let $(R_n)_{n\in\N}$ be a sequence of radii such that $R_n \uparrow \infty$ and $\mu_i(B_{R_n}) \geq 1/2$, and let $\mu_i^n$ be the normalised restriction of~$\mu_i$ to $B_{R_n}$, \emph{i.e.}, $\mu_i^n(A)=\mu_i(A\cap B_{R_n})/\mu_i(B_{R_n})$. For~$\nu \in \Pg$, define the functionals
\[J_i^n(\nu):= \begin{cases} \Fm(\nu) + \frac{1}{2\,h} \, W_h^2(\mu_i^n, \nu) &\mbox{ if $\spt(\nu) \subset B_{R_n}$,}\\ + \infty &\mbox{ otherwise,}\end{cases}\]
and denote by $\nu_i^n$ the minimiser of $J_i^n$. It is easy to check that the sequence $(J_i^n)_{n\in\N}$ \hbox{$\Gamma$-converges} to $\Fm + \frac{1}{2\,h} \, W_h^2(\mu_i, \cdot)$ as $n\to \infty$ for the narrow topology (to construct recovery sequences, one can use the same approximation by restriction and normalisation as in the definition of $\mu_i^n$). This entails narrow (in fact, weak $\mathrm L^m$) convergence of $\nu_i^n$ to a minimiser~$\nu_i$ of $\Fm + \frac{1}{2\,h} \, W_h^2(\mu_i, \cdot)$. By the $\mathrm L^1$-contraction principle of \cite{MR4553540},
\[\Vert (\nu_1^n-\nu_2^n)_+\Vert_{\mathrm L^1(B_{R_n})} \leq \Vert (\mu_1^n-\mu_2^n )_+\Vert_{\mathrm L^1(B_{R_n})} \, ,\]
and, since $\mu_i^n$ converges to $\mu_i$ in $\mathrm L^1$ and using the weak lower semicontinuity properties of the left hand side, by passing to the liminf, we readily obtain the desired inequality
\[\Vert (\nu_1-\nu_2)_+\Vert_{\mathrm L^1(\Gamma)} \leq \Vert (\mu_1-\mu_2)_+\Vert_{\mathrm L^1(\Gamma)} \, .\]

\smallskip In our JKO scheme, recall that $\mu_{k+1}^{(h)}$ is obtained from $\mu_k^{(h)}$ through~\eqref{eq:JKOrewritten}. For~$\delta\in \R^d$, set $\tau_\delta(x,v) \coloneqq (x-\delta, v)$. The invariance of $\Fm$ by horizontal translations gives
\[ {\tau_\delta}{}_\# \,\mu_{k+1}^{(h)} = \argmin_{\nu \in \Pg} \left\{\Fm(\nu) + \frac{1}{2\,h} \, W_h^{2} \( (\tau_\delta \circ {\Gh})_\# \mu_k^{(h)}, \nu\)\right\} \,, \]
which, combined with the above $\mathrm L^1$-contraction principle and the commutation of $\Gh$ with horizontal translations, yields
\begin{equation*}
\big\Vert {\tau_\delta}{}_\#\,\mu_{k+1}^{(h)}- \mu_{k+1}^{(h)} \big\Vert_{\mathrm L^1(\Gamma)} \leq \big\Vert (\tau_\delta\circ{\Gh})_\# \mu_k^{(h)} - {\Gh}{}_\# \mu_k^{(h)} \big\Vert_{\mathrm L^1(\Gamma)} = \big\Vert {\tau_\delta}{}_\# \mu_k^{(h)} - \mu_k^{(h)} \big\Vert_{\mathrm L^1(\Gamma)} \,,
\end{equation*}
so that for all $k$ and all $\delta\in \R^d$, we have
\begin{equation*}
\big\Vert {\tau_\delta}{}_\# \,\mu_k^{(h)} - \mu_k^{(h)} \big\Vert_{\mathrm L^1(\Gamma)} \leq \Delta_{0}(\delta)\coloneqq\big\Vert {\tau_\delta}{}_\#\,\mu_0 - \mu_0 \big\Vert_{\mathrm L^1(\Gamma)} \,.
\end{equation*}
We have thus shown that
\begin{equation}\label{eq:L1equicontinxhatmu0}
\sup_{t \ge 0 }\big\Vert {\tau_\delta}{}_\#\,\mu^{(h)}_t - {\mu}^{(h)}_t \big\Vert_{\mathrm L^1(\Gamma)} \leq \Delta_{0}(\delta)\,.
\end{equation}
Since $\mu_0$ lies in $\mathrm L^1(\Gamma)$, we have $\lim_{\delta \to 0_+}\Delta_{0}(\delta)=0$.

\bigskip\noindent\textbf{Step 4: Strong~$\mathrm{L}^1$-compactness.}
We are going to show that ${\mu}^{(h)}$ is relatively compact in $\mathrm L^1\big((0,T)\times \Gamma\big)$. To this end, we will make use of the refined version of the Aubin-Lions compactness Theorem due to Rossi and Savar\'e~\cite{MR2005609}. Notice that, in addition to the contractivity~\eqref{eq:L1equicontinxhatmu0}, we have a uniform bound $C$ on the moments $\sup_{t \in [0,T]} \iint_\Gamma \vert (x, v)\vert^2 \,d{\mu}^{(h)}_t$ by~\eqref{eq:basicjko2}, as well as on~$\int_0^T \IFm\big({\mu}^{(h)}_t\big)\, dt$ thanks to~\eqref{eq:boundF}. Define $K$ as the set of functions~$f\in \mathrm L^1\cap \Pg$ that satisfy
\begin{equation} \label{eq:defK} \iint_\Gamma |(x,v)|^2 f \, dx \, dv \leq C \, , \quad \left\Vert f \circ \tau_\delta^{-1}- f \right\Vert_{\mathrm L^1(\Gamma)} \leq \Delta_{0}(\delta)\quad\forall\, \delta \in \R^d \,, \end{equation}
and set
\[\mathscr J_m(f)\coloneqq\begin{cases} \IFm(f) &\mbox{ if $f\in K$,}\\+\infty &\mbox{ if $f\in \mathrm{L}^1(\Gamma) \setminus K$.} \end{cases}\]
We have
\[\sup_h \int_0^T \mathscr J_m \(\mu^{(h)}(t)\) dt \leq C\]
and $W\({\mu}^{(h)}(t), {\mu}^{(h)}(s)\) \le\Cst\,\sqrt{\,\vert t-s\vert + h}$ for any $(t,s) \in [0,T]^2$ by~\eqref{eq:muhholder}. 
If we prove that sublevel sets of $\mathscr J_m $ are relatively compact in $\mathrm L^1(\Gamma)$,~\cite[Theorem~2]{MR2005609} will enable us to conclude. To this end, we will invoke the Riesz--Fr\'echet--Kolmogorov Theorem on a sublevel set of~$\mathscr J_m $. Indeed, it follows from the uniform moment bound that any such sublevel is equitight, and we already have equicontinuity for~$x$-translations thanks to the second property in~\eqref{eq:defK}. There only remains to prove equicontinuity for~$v$-translations. Fix a sublevel set~$S$ of~$\mathscr J_m $. There exists a constant~$C_S$ such that $\Vert \nabla_v f^{m-1/2}\Vert_{\mathrm L^2(\Gamma)}\leq C_S$ for all~$f \in S$. Fix one such~$f$. Denoting~$g \coloneqq f^{m-1/2}$, one can express $f=g^{2\theta}$ for the exponent $\theta=1/(2m-1)$, which is smaller than $1$ since~$m\geq 1$. For every~$\delta \in \mathbb R^d$ and~$R > 0$, denoting by~$B_R$ the ball of radius~$R$ centered at~$0$, it follows that
\begin{align*}
&\iint_{B_R \times B_R} \vert f(x, v+\delta)-f(x,v)\vert\, dv\, dx \\
&\quad = \iint_{B_R \times B_R} \vert g(x, v+\delta)^\theta-g(x, v)^\theta\vert\, ( g(x, v+\delta)^\theta+g(x, v)^\theta) \,dv\,dx\\
&\quad \le \iint_{B_R \times B_R} \vert g(x, v+\delta)-g(x, v)\vert^\theta \, ( g(x, v+\delta)^\theta+g(x, v)^\theta) \,dv\,dx\\
&\quad \le 2 \, |B_R|^{1-\theta} \, \( \iint_{B_R \times B_R} \vert g(x, v+\delta)-g(x, v)\vert^2 \, dv \, dx \)^{\theta/2} \, \(\iint_{\Gamma} g^{2\theta} \,dv\,dx \)^{1/2} \\
&\quad \le 2 \, |\delta|^\theta \, |B_R|^{1-\theta} \( \iint_{\Gamma} |\nabla_v (f^{m-1/2})|^2 \, dv \, dx \)^{\theta/2} \(\iint_{\Gamma} f \,dv\,dx \)^{1/2} \le C_R (C_S \, |\delta|)^{\theta} \, ,
\end{align*}
where we used the fact that $\theta \le 1$ in the third line, and H\"{o}lder's inequality with exponents~$2/(1-\theta)$,~$2/\theta$, and~$2$ in the third line. Therefore,
\[ \Vert f(\cdot, \delta + \cdot)- f \Vert_{\mathrm L^1(\Gamma)} \le C_R \, \big(C_S \, |\delta| \big)^{\theta} + C \, \left(R^{-2} + \big(R-|\delta|\big)^{-2} \right) \]
for~$R > |\delta|$, which yields the desired $v$-equicontinuity.

\bigskip\noindent\textbf{Step 5: Limit points solve~\eqref{eq:nlkpm.}.}
Let~$\mu$ be a~$\mathrm{L}^1_\mathrm{loc}\big(\R^+; \mathrm{L}^1(\Gamma)\big)$-limit point of~$\mu^{(h)}$. We observe the following. First, thanks to~\eqref{eq:muhholder} and the refined version of the Arzel\`a--Ascoli Theorem from~\cite[Proposition 3.3.1]{MR2401600}, we may assume that $\mu \in \mathrm{C}^{0,1/2}\big([0,T], (\Pg, W)\big)$ with
\begin{equation*}
\lim_{h \to 0_+}\sup_{t\in [0, T]} W\(\mu^{(h)}_t, \mu_t\)=0\,.
\end{equation*}
In particular,~$\mu(0) = \mu_0$. Secondly, by indentifying~$\mu^{(h)}$ and $\mu$ with their densities with respect to the Lebesgue measure, we have~$\sqrt{\mu^{(h)}} \to \sqrt{\mu}$ in~$\mathrm{L}^2\big((0,T) \times \Gamma\big)$. Thirdly, by the uniform bound~\eqref{eq:unif_boundLm} on the~$\mathrm{L}^m$-norm and the Dunford--Pettis Theorem, we may assume that~$\big(\mu^{(h)}\big)^{m-1/2}$ converges weakly to~$\mu^{m-1/2}$ in~$\mathrm{L}^1\big((0,T) \times \omega\big)$ for all bounded domain~$\omega \subset \Gamma$. Lastly, by~\eqref{eq:boundF}, we may also assume that~$F^{(h)}\sqrt{\mu^{(h)}}$ converges weakly in~$\mathrm{L}^2\big((0,T)\times \Gamma;\R^d\big)$ to some function~$G$.

{}From the previous observations and using~\eqref{eq:defdeFhQh}, we obtain, for any test function~$\eta \in \mathcal{D}\big((0,T) \times \Gamma\big)$,
\begin{align*}
 \langle \eta, G \rangle &= \lim_{h \to 0_+} \( \tfrac{m}{m-1/2} \, \big\langle D_h \eta, (\mu^{(h)})^{m-1/2} \big\rangle - \left\langle v \, \eta, \sqrt{\mu^{(h)}} \, \right\rangle \)\\
 &= \tfrac{m}{m-1/2} \, \big\langle \nabla_v \eta, \mu^{m-1/2} \big\rangle - \langle v \, \eta, \sqrt{\mu} \rangle \, ,
\end{align*}
hence
\[
 G = -\left(\,\tfrac{m}{m-1/2} \, \nabla_v \big(\mu^{m-1/2}\big) + v \, \sqrt{\mu} \,\right) \, .
\]
In the light of~\eqref{eq:approxvlasov}, and again by the three convergences observed above, for any test function~$\eta$, we find
\begin{align*}
0 &= \lim_{h \to 0_+} \( \big\langle (\partial_t + v \cdot \nabla_x)\,\eta, \mu^{(h)}\big\rangle + \left\langle \nabla_v \eta \, \sqrt{\mu^{(h)}}, F^{(h)}\, \sqrt{\mu^{(h)}} \, \right\rangle \) \\
&= \big\langle (\partial_t + v \cdot \nabla_x)\,\eta, \mu \big\rangle + \big\langle \nabla_v \eta \, \sqrt{\mu} , G \big\rangle \\
&= \big\langle (\partial_t + v \cdot \nabla_x)\,\eta, \mu \big\rangle -\tfrac{m}{m-1/2} \, \big\langle \nabla_v \eta, \nabla_v (\mu^{m-1/2}) \, \sqrt{\mu} \big\rangle - \langle \nabla_v \eta, v \, \mu \rangle \, ,
\end{align*}
which is the desired conclusion.

\section{Maximal dissipation for \texorpdfstring{$m\in [1,3/2]$}{m in [1,3/2]}}\label{sec:sec4}

Now we assume that $m\in [1,3/2]$ and fix an horizon $T>0$. We know from Section \ref{Sec:JKO} that we may assume that the family $\mu^{(h)}$ constructed by the JKO scheme converges in $\mathrm{L}^1\big((0,T)\times \Gamma\big)$ and uniformly in time in Wasserstein distance to some $\mu$. The goal of this section is to prove that $\mu$ dissipates the energy maximally.

\bigskip\noindent\textbf{Step 1: De Giorgi interpolation and discrete energy dissipation.} We first need to improve~\eqref{eq:edi0} into a sharper discrete energy dissipation inequality. To do so, following~\cite{MR2401600,MR3409718,MR3625852}, we consider a De Giorgi variational interpolation scheme. For every $k=0, \ldots, N-1$ and~$\theta \in (0,1]$, let
\[
\hat{\nu}_k^{(h)}(\theta)\coloneqq \argmin_{\nu \in \Pg} \left\{ \Fm(\nu) + \frac{1}{2\,\theta\,h} \, W_h^2 \({\Gh}{}_\# \mu^{(h)}_k, \nu\)\right\}
\]
so that~$\hat{\nu}_k^{(h)}(1) = \mu_{k+1}^{(h)}$. From
\[
\Fm\bigl(\hat \nu_k^{(h)}(\theta)\bigr) + \frac{1}{2\,\theta\,h} \, W_h^2 \({\Gh}{}_\# \mu^{(h)}_k, \hat \nu_k^{(h)}(\theta)\) \le \Fm\bigl({\Gh}{}_\# \mu^{(h)}_k\bigr)
\]
we infer that~$\theta \mapsto \nu_k^{(h)}(\theta)$ can be continuously extended (in the Wasserstein sense) at~$0$ with~$\hat{\nu}_k^{(h)}(0)= {\Gh}{}_\# \mu^{(h)}_k$, and also that
\begin{equation}\label{eq:whnu}
\sup_{\theta \in [0,1]} W_h^2 \({\Gh}{}_\# \mu^{(h)}_k, \hat{\nu}_k^{(h)}(\theta)\) \le\Cst\,.
\end{equation}

It follows from classical arguments (see~\cite[paragraph 3.2]{MR3625852} and~\cite[paragraphs 3.1-3.2]{MR2401600}) that the following dissipation inequality holds
\begin{equation}\label{eq:edi1}
\Fm\(\mu_{k+1}^{(h)}\)-\Fm\(\mu_k^{(h)}\) +\frac{1}{2\,h} \, W_h^2\({\Gh}{}_\# \mu^{(h)}_k, \mu_{k+1}^{(h)}\) + \frac{h}{2} \int_0^1 \left\vert \partial_h \Fm \(\hat{\nu}_k^{(h)}(\theta)\)\right\vert^2\,d \theta \leq 0 \, ,
\end{equation}
where
\[
\big\vert \partial_h \Fm(\mu) \big\vert\coloneqq \limsup_{\nu \to \mu} \frac{ \big(\Fm(\mu)-\Fm(\nu)\big)_+}{W_h(\mu, \nu)}
\]
denotes the descending metric slope. Here we use again $ \Fm\big(\mu_k^{(h)}\big)= \Fm \big({\Gh}{}_\# \mu^{(h)}_k\big)$, and the convergence $\nu \to \mu$ is intended in the $W_h$ sense or, equivalently, in the $W$ sense, as long as $h\in (0,1)$ is fixed.

Let us compute the metric slope. To do this, we define
\begin{equation}\label{eq:formEmh}
\Fm^{(h)}(f)\coloneqq \Fm \big({A_h^{-1}}\!_\# f\big)= \frac{\det(A_h)^{m-1}}{m-1} \iint_\Gamma f^{m}\,dx \, dv + \frac{1}{2} \iint_\Gamma \vert v\vert^2 \, f \,dx \, dv\,.
\end{equation}
Thanks to~\eqref{eq:formulasforch}, we observe that
\[
\big\vert \partial \Fm^{(h)}({A_h}{}_\#\mu) \big\vert = \limsup_{\nu \to \mu} \frac{ \(\Fm^{(h)} \({A_h}{}_\#\mu\)-\Fm^{(h)} ({A_h}{}_\#\nu)\)_+}{W\({A_h}{}_\# \mu, {A_h}{}_\#\nu\)}=\big\vert \partial_h \Fm(\mu) \big\vert
\]
where $\big\vert \partial \Fm^{(h)}\big\vert$ stands for the slope in the usual Wasserstein sense. Using the explicit form of~\eqref{eq:formEmh}, $\Fm^{(h)}$ is a displacement convex functional in the usual Wasserstein sense on $\Gamma$. Thus, when~$\big\vert \partial \Fm^{(h)}(\mu)\big\vert < +\infty$,~\cite[Theorem~10.4.13]{MR2401600} ensures that~$d\mu = f \, dx \, dv$ with~$f^m \in \mathrm{W}_\mathrm{loc}^{1,1}(\Gamma)$, and
\[
\big\vert \partial \Fm^{(h)}(\mu)\big\vert^2=\iint_\Gamma \left\vert\, \det(A_h)^{m-1} \, \frac{\nabla_{x,v} (f^m)}{f}+ (0, v) \,\right\vert^2 \, d\mu \,.
\]
{}From the change of variables $d({A_h}{}_\#\mu)=\det(A_h)^{-1} f \circ {A_h}^{-1} \, dx \, dv$, we deduce
\begin{align*}
&\big\vert \partial_h \Fm(\mu) \big\vert^2 = \big\vert \partial \Fm^{(h)}\({A_h}{}_\#\mu\)\big\vert^2\\
&= \iint_\Gamma \left\vert\, \frac{[(\nabla_{x,v} (f^m))\circ A_h^{-1}] \cdot A_h^{-1}}{f \circ A_h^{-1}}+ (0, v) \,\right\vert^2 \, d(A_{h}{}_\#\mu)\\
&= \iint_\Gamma \left\vert\, \frac{(\nabla_{x,v} (f^m)) \cdot A_h^{-1}}{f}+ (0, v) \,\right\vert^2 \, d\mu\nonumber \ge \iint_\Gamma \left\vert\, \frac{1}{f}\left(\frac{h}{2} \,\nabla_{x} + \nabla_v\right) (f^m)+ v \,\right\vert^2 \, d\mu\,.
\end{align*}
Consider~$D_h \coloneqq \frac{h}{2}\nabla_x + \nabla_v$.
When~$\big\vert \partial \Fm^{(h)}(\mu)\big\vert^2$ is finite, by Lemma~\ref{lemma:chain} applied with~$\alpha=1-1/(2\,m)$ and~$g=f^m$, we may write~$D_h (f^m)$ in terms of~$\sqrt{f}$ and~$D_h(f^{m-1/2})$, see~\eqref{eq:chain_m_mhalf}. We infer that
\[
\vert \partial_h \Fm(\mu) \vert^2 \geq \IFm^{(h)}(f) \coloneqq \iint_\Gamma \left|\,\frac{m}{m-1/2}\, D_h \bigl(f^{m-\frac{1}{2}}\bigr) + v\,\sqrt{f}\,\right|^2 \, dx \, dv \,.
\]
Moreover, by a direct change of variable argument, one can easily obtain the alternative expression for this modified Fisher information as
\begin{equation}\label{eq:modfisherh}
\IFm^{(h)}(f)=\IFm \left(f \circ \mathcal{G}_{\frac{h}{2}}\right)=\IFm \({\mathcal{G}_{-\frac{h}{2}}}{}_\# \mu\)\,.
\end{equation}
Let us  define the curve $[0,T] \ni t \mapsto \hat{\mu}^{h}_t$ by
\[
\hat{\mu}^{(h)}_t \coloneqq \hat{\nu}_k^{(h)} \Big(\frac{t-k\,h}{h}\Big)\,, \quad t\in \big(k\,h, (k+1)\,h\big]\,, \quad 0 \leq k \leq N-1\,.
\]
Inequality~\eqref{eq:edi1}, together with~\eqref{eq:formulasforch},~\eqref{eq:modfisherh}, and~\eqref{eq:metricfisher} implies
\begin{align}
0 & \geq \Fm\(\mu_{k+1}^{(h)}\)-\Fm\(\mu_{k}^{(h)}\)+ \frac{1}{2\,h} \, W_h^2\({\Gh}{}_\# \mu^{(h)}_k, \mu_{k+1}^{(h)}\) + \frac{1}{2}\int_{k\,h}^{(k+1)\,h}\kern-8pt \IFm^{(h)} \(\hat{\mu}^{(h)}_s\) ds \nonumber\\
&=\Fm\(\mu_{k+1}^{(h)}\)-\Fm\(\mu_{k}^{(h)}\)+ \frac{1}{2\,h} \, \sfd_h^2\(\mu^{(h)}_k, \mu_{k+1}^{(h)}\)+ \frac{1}{2}\int_{k\,h}^{(k+1)\,h}\kern-8pt \IFm \left({\mathcal{G}_{-\frac{h}{2}}}{}_\# \hat{\mu}^{(h)}_s\right) ds \nonumber\\
& \geq \Fm\(\mu_{k+1}^{(h)}\)-\Fm\(\mu_{k}^{(h)}\)+ \frac{1}{2} \int_{k\,h}^{(k+1)h} \! \left[ \big\Vert F^{(h)}_s \big\Vert^2_{\mathrm L^2\(\mu^{(h)}(s)\)} + \IFm \left({\mathcal{G}_{-\frac{h}{2}}}{}_\# \hat{\mu}^{(h)}_s\right) \right] ds\,.
\label{eq:discreteEDI}
\end{align}

{}From~$W^2 \leq 2 \, W_h^2$, the triangle inequality, and~\eqref{eq:basicjko1},~\eqref{eq:whnu}, we deduce
\[
\sup_{t\in[0,T]} W\(\mu^{(h)}_t, \hat{\mu}^{(h)}_t\) \le\Cst\,\sup_{t\in[0,T]} W_h\(\mu^{(h)}_t, \hat{\mu}^{(h)}_t\) \le\Cst\,\sqrt{h}.
\]
Moreover, by the very same $\mathrm L^1$-contraction and invariance by horizontal translations arguments as in Section~\ref{Sec:JKO}, the interpolation $\hat{\mu}^{(h)}$ satisfies
\begin{equation}\label{eq:L1equicontinxhatmu} 
\sup_{t\in [0,T] }\Vert {\tau_{\delta}}{}_\# \hat{\mu}^{(h)}_t - \hat{\mu}^{(h)}_t \Vert_{\mathrm L^1(\Gamma)} \leq \max_{0\leq k \leq N} \Vert {\tau_{\delta}}{}_\# \mu_k^{(h)} - \mu_k^{(h)} \Vert_{\mathrm L^1(\Gamma)} \leq \Delta_{0}(\delta).
\end{equation}

\bigskip\noindent\textbf{Step 2: Maximal dissipation.} In view of \eqref{eq:discreteEDI}, it is natural to define the family of curves of measures $\tilde{\mu}^{(h)}\coloneqq{\mathcal{G}_{-\frac{h}{2}}}{}_\# \hat{\mu}^{(h)}$. Then, we have
\begin{align*}
W^2\(\hat{\mu}^{(h)}_t, \tilde{\mu}^{(h)}_t\) &\leq \iint_\Gamma \left\vert\, \mathrm{id}-\mathcal{G}_{-\frac{h}{2}} \,\right\vert^2 \, d \hat{\mu}^{h}_t
= \frac{h^2}{4} \iint_\Gamma |v|^2 \, d\hat{\mu}^{(h)}_t \\
&\le \frac{h^2}{2} \Fm\(\hat{\mu}^{(h)}_t \) \le \Cst\,h^2 \, .
\end{align*}
As a consequence, we obtain
\begin{equation*}
\lim_{h\to 0_+} \sup_{t\in [0, T]} W\(\tilde{\mu}^{(h)}_t, \mu_t\) = 0
\end{equation*}
and we have uniform moment bounds on $\tilde{\mu}^{(h)}$. We deduce from \eqref{eq:L1equicontinxhatmu}:
\begin{equation}\label{eq:L1equicontinxtiltmu} 
\sup_{t\in [0,T] }\Vert {\tau_{\delta}}{}_\# \tilde{\mu}^{(h)}_t - \tilde{\mu}^{(h)}_t \Vert_{\mathrm L^1(\Gamma)}=\sup_{t\in [0,T] }\Vert {\tau_{\delta}}{}_\# \hat{\mu}^{(h)}_t - \hat{\mu}^{(h)}_t \Vert_{\mathrm L^1(\Gamma)} \leq \Delta_{0}(\delta).
\end{equation}
Observe now that~\eqref{eq:unif_boundLm} and~\eqref{eq:discreteEDI} implies
\begin{equation}\label{eq:boundFishertilde}
 \int_0^T \IFm \left(\tilde{\mu}^{(h)}_s\right) ds \leq \Cst.
 \end{equation}
With uniform moment bounds, \eqref{eq:L1equicontinxtiltmu} and \eqref{eq:boundFishertilde}, we can argue exactly as in Section \ref{Sec:JKO} to deduce that $\tilde{\mu}^{(h)}$ is relatively compact in $\mathrm{L}^1\big((0,T)\times \Gamma\big)$ hence we may assume
\[
 \lim_{h\to 0_+} \big\Vert \tilde{\mu}^{(h)}-\mu \big\Vert_{\mathrm{L}^1((0,T)\times \Gamma)}=0.
 \]
If $t\in (k\,h,(k+1)\,h]$, by construction $\Fm\(\mu^{(h)}_t\)= \Fm\(\mu^{(h)}_{k+1}\)$ hence summing the inequalities \eqref{eq:discreteEDI}, from $k=0$ to $\ell$, we obtain
\begin{align}
0 & \geq \Fm\(\mu^{(h)}_t\)-\Fm(\mu_0)\nonumber\\
&\qquad+ \frac{1}{2} \int_0^{(\ell+1)\,h} \Vert F^{(h)}_\tau \Vert^2_{\mathrm L^2(\mu^{(h)}_\tau)}\, d\tau+ \frac{1}{2} \int_0^{(\ell+1)\,h} \IFm \Big(\mathcal{G}_{\frac{h}{2}}{}_\# \hat{\mu}^{(h)}_\tau \Big)\, d\tau \nonumber\\
& \geq \Fm\(\mu^{(h)}_t\)-\Fm(\mu_0)+ \frac{1}{2} \int_0^{t} \Vert F^{(h)}_\tau \Vert^2_{\mathrm L^2(\mu^{(h)}_\tau)}\, d\tau+ \frac{1}{2} \int_0^{t} \IFm \Big(\ \tilde{\mu}^{(h)} \Big)\, d\tau \label{eq:edih2}\,.
\end{align}
Recalling \eqref{eq:defdeFhQh}, using \eqref{eq:boundQhL1} and a standard lower semicontinuity argument (see~\cite[Theorem 2.34]{MR1857292}), we deduce that $Q^{(h)}$ converges in the sense of distributions to some vector measure $Q$ with $Q_t=F_t \, \mu_t$, $F_t\in \mathrm L^2(\mu_t)$, such that the pair $(\mu_t,F_t)$ solves~\eqref{eq:vlasov.} and
\begin{equation}\label{eq:BBlikelsc}
\int_0^t \iint_\Gamma \vert F_\tau \vert^2\,d\mu_\tau\,d\tau= \int_0^t \iint_\Gamma \Big \vert \frac{ d Q}{ d \mu_\tau} \Big\vert^2\,d\mu_\tau\,d\tau \leq \liminf_{h \to 0_+} \int_0^t \iint_\Gamma \big\vert F^{(h)}_\tau\big\vert^2\,d\mu^{(h)}_\tau\,d\tau
\end{equation}
for any $t\in [0,T]$.

Let us now write the Fisher information of $\tilde{\mu}^{(h)}_\tau$ as
\begin{align}
 \IFm \Big(\tilde{\mu}^{(h)}_\tau\Big)=\iint_\Gamma\(\big(\tfrac{2\,m}{2\,m-1}\big)^2\,\big|\nabla_v\big(\big( \tilde{\mu}^{(h)}_\tau \big)^{m-1/2}\big)\big|^2\,+|v|^2\, \tilde{\mu}^{(h)}_\tau \)dx\,dv \label{eq:decompfisher1}\\
 + \iint_\Gamma \frac{2m}{m-1/2} \nabla_v \big(\tilde{\mu}^{(h)}_\tau \big)^{m-1/2} \cdot v\, \sqrt{\tilde{\mu}^{(h)}_\tau} dx\,dv\label{eq:decompfisher2}
\end{align}
By Lemma~\ref{lem:fisherlsc}, the first term in \eqref{eq:decompfisher1} is lower semicontinuous for the strong $\mathrm L^1$ topology.
\begin{lemma}\label{lem:fisherlsc}
Let $\alpha \geq 0$, and $(f_n)_{n\in\N}$ be a sequence of probability densities on $\Gamma$. If~$(f_n)_{n\in\N}$ converges strongly in $\mathrm L^1(\Gamma)$ to $f$, then we have
\[
\liminf_{n\to\infty} \iint_\Gamma \vert \nabla_v f_n^{\alpha} \vert^2\,dx\,dv \geq \iint_\Gamma \vert \nabla_v f^{\alpha} \vert^2\,dx\,dv\,.
\]
\end{lemma}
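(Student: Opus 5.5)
We will prove the lower semicontinuity by duality, writing the Dirichlet-type integral as a supremum of linear functionals that are continuous under strong $\mathrm L^1$ convergence. Without loss of generality we may assume that the left-hand side is finite and, after extracting a subsequence, that it is a limit and that $f_n\to f$ almost everywhere. The case $\alpha=0$ is trivial, since both sides vanish (with the convention $\nabla_v f^0=0$). We therefore assume $\alpha>0$. Then $\nabla_v f_n^\alpha$ is bounded in $\mathrm L^2(\Gamma;\R^d)$, so a further subsequence converges weakly in $\mathrm L^2$ to some vector field $G$. The whole point is to identify $G$ as the distributional gradient $\nabla_v f^\alpha$. Once this is done, weak lower semicontinuity of the $\mathrm L^2$ norm gives the claim directly.

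For the identification, fix a test field $\varphi\in \mathcal D(\Gamma;\R^d)$. Then
\[
\iint_\Gamma \nabla_v f_n^\alpha\cdot\varphi\,dx\,dv=-\iint_\Gamma f_n^\alpha\,\nabla_v\cdot\varphi\,dx\,dv .
\]
We must show that the right-hand side converges to $-\iint_\Gamma f^\alpha\,\nabla_v\cdot\varphi\,dx\,dv$. Since $\varphi$ has compact support $\omega$, it suffices to prove $f_n^\alpha\to f^\alpha$ in $\mathrm L^1(\omega)$.

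\emph{Case $\alpha\le 1$.} The inequality $|a^\alpha-b^\alpha|\le|a-b|^\alpha$ combined with Hölder's inequality on the bounded set $\omega$ gives
\[
\|f_n^\alpha-f^\alpha\|_{\mathrm L^1(\omega)}\le|\omega|^{1-\alpha}\,\|f_n-f\|_{\mathrm L^1}^\alpha\to0 .
\]

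\emph{Case $\alpha>1$.} This is the only delicate point, since strong $\mathrm L^1$ convergence alone does not control $f_n^\alpha$. Here we use the gradient bound itself. The functions $g_n:=f_n^\alpha$ are bounded in $\mathrm L^{1/\alpha}$, since $\iint g_n^{1/\alpha}=1$, and $\nabla_v g_n$ is bounded in $\mathrm L^2$. Slicing in $x$ and applying a one-dimensional (or $d$-dimensional in $v$) Gagliardo--Nirenberg/Sobolev inequality on $\omega$, which interpolates between $\mathrm L^{1/\alpha}$ and $\dot H^1_v$, yields a bound on $g_n$ in $\mathrm L^{p}(\omega)$ for some $p>1$. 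Sobolev in $v$ gives integrability exponent $2^*>2$ for each $x$, and integrating in $x$ over a bounded set with Hölder combines the pieces. An $\mathrm L^p$ bound with $p>1$ means $(g_n)$ is uniformly integrable on $\omega$. Together with almost everywhere convergence $g_n\to f^\alpha$, Vitali's theorem then gives $g_n\to f^\alpha$ in $\mathrm L^1(\omega)$.

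I expect this equi-integrability step for $\alpha>1$ to be the main obstacle. If the slicing argument proves awkward, an alternative is to truncate: replace $f_n$ by $f_n\wedge M$, use that $\nabla_v (f_n\wedge M)^\alpha=\mathbbm 1_{\{f_n<M\}}\nabla_v f_n^\alpha$ has smaller $\mathrm L^2$ norm, prove the inequality for the truncations (where dominated convergence trivially gives $\mathrm L^1_{\rm loc}$ convergence of $(f_n\wedge M)^\alpha$), and finally let $M\to\infty$ using monotone convergence on the right-hand side.

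With convergence of $\iint_\Gamma f_n^\alpha\,\nabla_v\cdot\varphi$ established in either case, we get $\langle G,\varphi\rangle=-\langle f^\alpha,\nabla_v\cdot\varphi\rangle$ for all $\varphi$, that is, $G=\nabla_v f^\alpha$. Consequently
\[
\|\nabla_v f^\alpha\|_{\mathrm L^2}^2\le\liminf_{n\to\infty}\|\nabla_v f_n^\alpha\|_{\mathrm L^2}^2 .
\]
Since the original $\liminf$ was realised along the extracted subsequence, this concludes the proof. In the paper the lemma is applied with $\alpha=m-1/2\in[1/2,1]$, so the easy case $\alpha\le1$ already covers the use made in Section~\ref{sec:sec4}.
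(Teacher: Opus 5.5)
The paper states this lemma without proof. The argument it relies on elsewhere is exactly yours: in the proof of Lemma~\ref{Lem:Entropy-Fisher}, where convergence of $(f^\epsilon)^{m-1/2}$ in $\mathrm L^{1/(m-1/2)}$ identifies the weak limit of the $v$-gradients, and again in Section~\ref{sec:sec4} for the cross term. That is, extract a weak $\mathrm L^2$ limit of $\nabla_v f_n^\alpha$, identify it with $\nabla_v f^\alpha$ through strong local convergence of $f_n^\alpha$, and conclude by weak lower semicontinuity of the norm. Your treatment of $0<\alpha\le 1$ is complete and correct, and this is the only range the paper uses ($\alpha=m-1/2\in[1/2,1]$). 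Note that $|a^\alpha-b^\alpha|^{1/\alpha}\le|a-b|$ even gives $f_n^\alpha\to f^\alpha$ globally in $\mathrm L^{1/\alpha}(\Gamma)$, which is the form the paper quotes.

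For $\alpha>1$, what you wrote is not yet a proof, and both of your routes stall at the same point.

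\emph{First route.} The Sobolev/Gagliardo--Nirenberg inequality has to be applied on the \emph{whole} velocity line or space, for a.e.\ value of the remaining variables, not on the bounded set $\omega$ as you wrote. An inequality on a bounded velocity domain necessarily carries a lower-order term that is superlinear in $\int f_n(x,v)\,dv$ (test it on constants). The mass bound alone does not control that term after integrating in $x$. Also, for $d\le 2$ there is no exponent $2^*$, so the bookkeeping genuinely matters. What closes the argument is an estimate whose right-hand side is \emph{linear} in the controlled quantities. For $g\ge0$ on $\R$, the bound $|g(v)-g(v_0)|\le\Vert g'\Vert_{\mathrm L^2}\,|v-v_0|^{1/2}$ gives $\Vert g\Vert_{\mathrm L^\infty}^{2+1/\alpha}\le C\,\Vert g'\Vert_{\mathrm L^2}^2\int_\R g^{1/\alpha}$. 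With $p=1+\frac{1}{2\alpha}>1$ and Young's inequality this becomes $\Vert g\Vert_{\mathrm L^\infty}^{p}\le C\big(\Vert g'\Vert_{\mathrm L^2}^2+\int_\R g^{1/\alpha}\big)$. Apply this to $g=f_n^\alpha$ along lines in the $v_1$-direction and integrate over the other variables in a bounded set. This bounds $\Vert f_n^\alpha\Vert_{\mathrm L^p(\omega)}^p$ by $C_\omega\big(\Vert\nabla_v f_n^\alpha\Vert_{\mathrm L^2}^2+1\big)$, and Vitali's theorem then finishes as you say.

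\emph{Truncation fallback.} It does not avoid this obstacle. To let $M\to\infty$ in $\Vert\nabla_v(f\wedge M)^\alpha\Vert_{\mathrm L^2}^2\le\liminf_n\Vert\nabla_v f_n^\alpha\Vert_{\mathrm L^2}^2$, you need $(f\wedge M)^\alpha\to f^\alpha$ in $\mathcal D'$, i.e.\ $f^\alpha\in\mathrm L^1_{\rm loc}$. Writing $\nabla_v(f\wedge M)^\alpha=\mathbf 1_{\{f<M\}}\nabla_v f^\alpha$ in order to use monotone convergence presupposes that $\nabla_v f^\alpha$ is already a locally integrable function, which is what you are trying to prove. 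Establishing $\sup_M\Vert(f\wedge M)^\alpha\Vert_{\mathrm L^1(\omega)}<\infty$ requires the same slice estimate as above.
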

 In \eqref{eq:decompfisher2}, we have to handle the product of $v\, \sqrt{\tilde{\mu}^{(h)}}$ by $\nabla_v (\tilde{\mu}^{(h)})^{m-1/2}$. Note that $v\sqrt{\tilde{\mu}^{(h)}}$ converges weakly in $\mathrm{L^2}$ to $v\, \sqrt{\mu}$, but, by Wasserstein convergence, we also have convergence of second moments, which implies that the convergence of these square roots is in fact strong in $\mathrm{L^2}$. As for the $\mathrm{L^2}$-bounded sequence $\nabla_v \big(\tilde{\mu}^{(h)}\big)^{m-1/2}$, one easily deduces from the strong convergence of $\tilde{\mu}^{(h)}$ to $\mu$ in $\mathrm{L}^1$, that $\nabla_v \big(\tilde{\mu}^{(h)}\big)^{m-1/2}$ converges to $\nabla_v \big(\tilde{\mu}^{(h)}\big)^{m-1/2}$ weakly in $\mathrm{L}^2$. Using these observations and Fatou's lemma for the second term in \eqref{eq:decompfisher1}, we get
 \begin{equation}\label{eq:ineqliminffishert}
\liminf_{h\to 0_+} \int_0^t \IFm \Big(\tilde{\mu}^{(h)}_\tau\Big) \, d\tau \geq \int_0^t \IFm (\mu_\tau) \, d\tau\,.
 \end{equation}

To pass to the liminf in~\eqref{eq:edih2}, we notice that $\Fm$ is lower semicontinuous for the convergence in Wasserstein distance, so that with~\eqref{eq:BBlikelsc}, \eqref{eq:ineqliminffishert} and Fatou's lemma, we conclude that, for a.e.~$t \in [0,T]$, the limit curve satisfies
\begin{align}
0 & \geq \Fm(\mu_t)-\Fm(\mu_0)+ \frac{1}{2} \int_0^t \Vert F_\tau \Vert^2_{\mathrm L^2(\mu_\tau)}\, d\tau + \frac{1}{2} \int_0^t \IFm (\mu_\tau)\, d\tau \nonumber\\
& \geq \Fm(\mu_t)-\Fm(\mu_0)+ \frac{1}{2} \int_0^t \left(|\mu'_\tau|_\sfd^2+ \IFm (\mu_\tau)\right) d\tau \label{eq:ede3}
\end{align}
where we used the fact that $|\mu'_t|_\sfd \leq \Vert F_t\Vert_{\mathrm {L}^2(\mu_t)}$ for a.e.~$t$ by~\cite[Proposition~5.5]{brigati2025kinetic}.

For $m\in [1,3/2]$, we can now use the chain rule \eqref{eq:chainp.} in Theorem \ref{thm1.} (note that until now, we have not used any restriction on $m\geq 1$) . Indeed, fix~$t$ and a sequence~$t_n \to 0$ in the full-measure set for which~$\eqref{eq:chainp.}$ holds. Since~$t \mapsto \mu_t$ is $W$-continuous and~$\Fm$ is lower semicontinuous, we obtain
\begin{align*}
 \Fm(\mu_0) &\le \liminf_{n \to \infty} \Fm(\mu_{t_n}) \le \Fm(\mu_t) + \frac{1}{2} \liminf_{n \to \infty} \int_{t_n}^t \left(|\mu'_\tau|_\sfd^2+ \IFm (\mu_\tau)\right) d\tau \\
 &\le \Fm(\mu_t) + \frac{1}{2} \int_{0}^t \left(|\mu'_\tau|_\sfd^2+ \IFm (\mu_\tau)\right) d\tau \, .
\end{align*}
It follows that~\eqref{eq:ede3} is, in fact an equality. Writing this equality for two times~$t=a$ and~$t=b$, and by substraction, we obtain equality in~\eqref{eq:chainp.}.

\begin{acknowledgement} This work has been supported by the Project \emph{Conviviality} (JD, ANR-23-CE40-0003) and the PEPR PDE-AI project (FQ, ANR-23-PEIA-0004) of the French National Research Agency. GC and JD thank B.~Nazaret for early discussions on kinetic equations.\\
{\scriptsize\copyright\,\the\year\ by the authors. This paper may be reproduced, in its entirety, for non-commercial purposes. \href{https://creativecommons.org/licenses/by/4.0/legalcode}{CC-BY 4.0}}
\end{acknowledgement}

\end{document}